\newcommand\xleftrightarrow[2][]{\ext@arrow 0099{\longleftrightarrowfill@}{#1}{#2}}
\def\longleftrightarrowfill@{\arrowfill@\leftarrow\relbar\rightarrow}
\numberwithin{equation}{section}
\theoremstyle{plain}
\newtheorem{theorem}{Theorem}[section]
\newtheorem{proposition}{Proposition}[section]
\theoremstyle{definition}
\newtheorem{definition}{Definition}[section]
\newtheorem{remark}{Remark}[section]
\author[1,*]{ \textbf{Noel T. Fortun}}
\author[1,2,3]{ \textbf{Angelyn R. Lao}}
\author[1,2,4]{\textbf{Eduardo R. Mendoza}}
\author[5]{\textbf{Luis F. Razon}}
\affil[1]{\footnotesize \textit{Department of Mathematics and Statistics, De La Salle University, Manila  0922, Philippines}}
\affil[2]{\textit{Center for Natural Sciences and Environmental Research, De La Salle University, Manila  0922, Philippines}}
\affil[3]{\textit{Center for Complexity and Emerging Technologies, De La Salle University, Manila  0922, Philippines}}
\affil[4]{\textit{Max Planck Institute of Biochemistry, Martinsried near Munich, Germany}}
\affil[5]{\textit{Department of Chemical Engineering, De La Salle University, Manila  0922, Philippines}}
\affil[*]{Corresponding author: \texttt{noel.fortun@dlsu.edu.ph}}
\title{\textbf{Parameter-minimal analysis of carbon dioxide removal through direct air capture}}
\date{}
\begin{document}
\maketitle
\thispagestyle{empty}
\begin{abstract}
The potential for multistationarity, or the existence of steady-state multiplicity, in the Earth System raises concerns that the planet could reach a climatic `tipping point,' rapidly transitioning to a warmer steady-state from which recovery may be practically unattainable. In detailed Earth models that require extensive computation time, it is difficult to make an a priori prediction of the possibility of multistationarity. In this study, we demonstrate Chemical Reaction Network Theory (CRNT) analysis of a simple heuristic box model of the Earth System carbon cycle with the human intervention of Direct Air Capture. CRNT leverages parameter-minimal analysis, relying primarily on the graphical and kinetic structure of the reaction network system, to identify necessary conditions for steady-state multiplicity. The analysis reveals necessary conditions for the combination of system parameters where steady-state multiplicity may exist. With this method, other negative emissions technologies (NET) may be screened in a relatively simple manner to aid in the priority setting by policymakers. Beyond multistationarity, the analysis provides insights into key system properties, such as absolute concentration robustness and some conditions for atmospheric carbon reduction.
\end{abstract}

\singlespacing

\section{Introduction}

Carbon dioxide removal (CDR) technologies play a crucial role in mitigating climate change by removing CO$_2$ from the atmosphere. These technologies encompass a variety of methods such as afforestation, reforestation, direct air capture, and bioenergy with carbon capture and storage \cite{GALAN2021,KERNER2023}. Implementing CDR technologies is vital in achieving the goals set out in the Paris Agreement, which aims to keep the global mean surface temperature well below 2°C and target 1.5°C \cite{CLIMCH2022}. 

Specifically, Direct Air Capture (DAC) represents a state-of-the-art solution in the fight against climate change.  By capturing carbon dioxide directly from the atmosphere, DAC may help reduce the levels of this greenhouse gas, thereby lessening the impact of global warming. In this technology, carbon dioxide is directly captured from the atmosphere using chemical absorbents. The captured carbon is securely stored in geological formations, preventing its release back into the atmosphere \cite{KERNER2023,QIU2022}. 

As researchers explore the intricate dynamics of climate change, the notion of climate tipping points has emerged as a key area of interest \cite{DAKOS2024,foley2005tipping,AND2013,ANDERIES2023}. These tipping points represent moments when the climate system reaches a threshold and undergoes self-perpetuating changes, leading to profound and possibly irreversible impacts on our planet. Understanding and anticipating these tipping points is essential to formulate successful approaches for lessening the impact of climate change.  Researchers have developed sophisticated models, primarily utilizing numerical simulations, to simulate the intricate dynamics of multistationarity in the global carbon system
\cite{DAKOS2024}. 
However, the challenge lies in pinpointing the precise conditions that can trigger multistationarity within the system \cite{QIU2022,REALM2019,LEHTVEER2021,ANDERIES2023}.

In this study, we explore the possibility of the global carbon system, under DAC intervention, displaying multiple steady states by employing a methodology based in reaction networks. The first step entails building a reaction network that mirrors the dynamic behavior and properties of the global carbon system under investigation. By applying chemical reaction network theory (CRNT), key characteristics, including the system's potential for multiple steady states, are efficiently identified. In addition, the analysis offers valuable insights into essential system characteristics, such as absolute concentration robustness, along with specific conditions for reduction of atmospheric carbon.

CRNT provides a unique advantage by focusing on the topological features and kinetics of the network itself, eliminating the need to define system rate constants. This characteristic is especially beneficial when studying systems where such parameter values are unknown. By enabling a rate-constant-minimal analysis, CRNT promises to be a useful tool for revealing the intricacies of systems with uncertain rate-constant data, thereby providing critical insights into the dynamic behavior of the global carbon system.

\section{Model and Methods}

The global carbon cycle has been modeled with varying complexity. While sophisticated state-of-the-art models offer highly accurate Earth system predictions, they rely on complex networks (with an enormous number of equations) that are computationally expensive. Here, we focus on a simple heuristic box model, based on the spatial aggregation of carbon while focusing only on the processes that are most significant at a global scale. Though box models of carbon cycle are not quantitatively precise, they provide reasonable accuracy and help reveal important interactions and feedback often obscured in more complex systems \cite{SCHM2002,AND2013, HECK2016}.

\subsection{The model}

\begin{figure}[t]%
\centering
\includegraphics[width=0.8\textwidth]{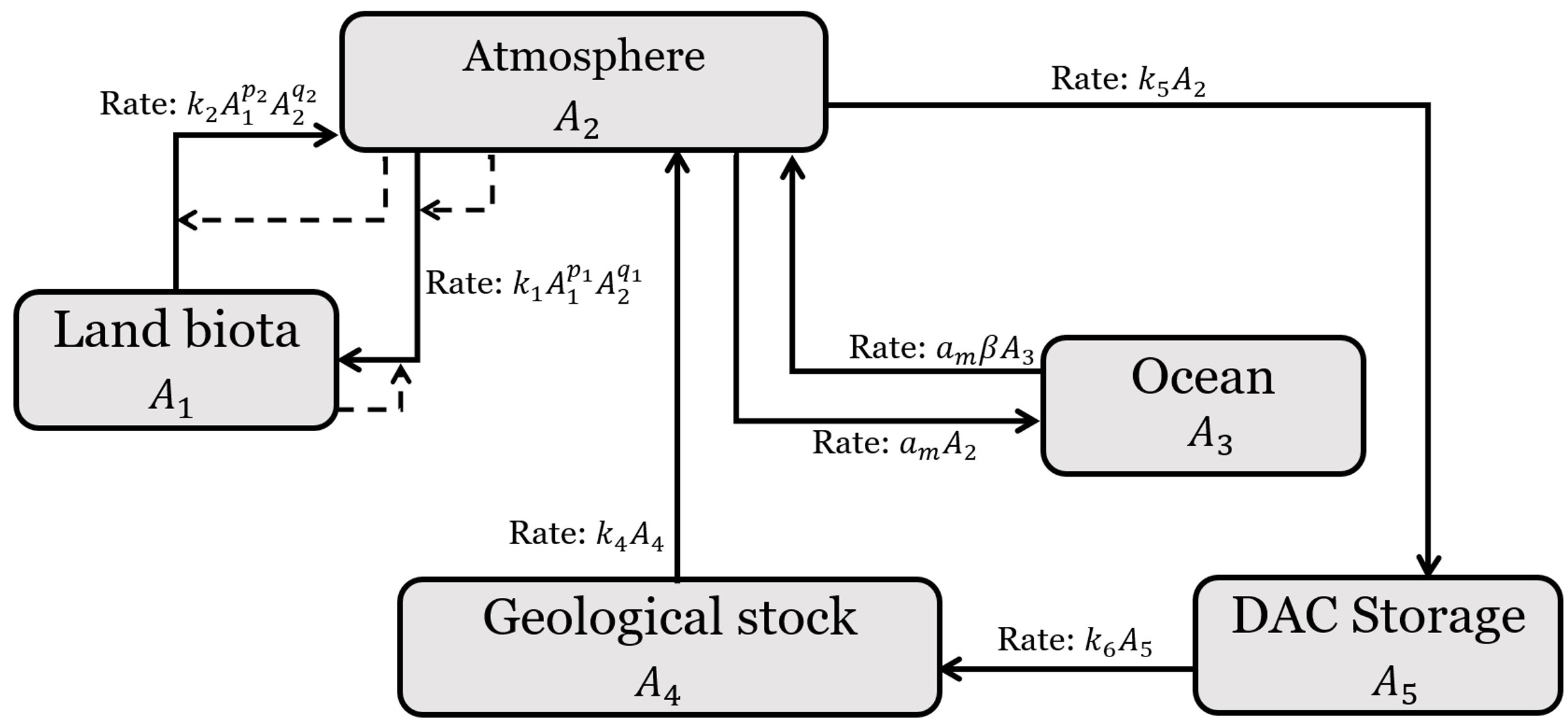}
\caption{In the box model, the boxes represent the different pools, solid arrows indicate the transfer of carbon from one pool to another, and dashed arrows indicate the pools that influence a carbon transfer.}\label{fig:DACPL}
\end{figure}

The pre-industrial system of \citet{AND2013} forms the building block for developing and examining the global carbon cycle system with DAC intervention. The model extends the initial three-box model that considers carbon interactions in the land-atmosphere-ocean system of \citet{AND2013}, denoted by $A_1$, $A_2$ and $A_3$ respectively. The schematic diagram is shown in Figure \ref{fig:DACPL}. The modeling framework relies on ordinary differential equations (ODEs) where all processes are modeled by products of power law functions. More precisely, a Generalized Mass Action (GMA)  system is an ODE system established by individually approximating each process in the system with a power-law term \cite{SAVA1998,SAVA1969,VOIT2000,VOIT2006,VOIT2013,TORRES2002}. These terms are then aggregated, with incoming fluxes indicated by a plus sign and outgoing fluxes by a minus sign. The procedure for deriving power-law approximations of rate functions is based on Taylor linearization in logarithmic coordinates. For a detailed derivation of the power-law approximation of the process rates in the pre-industrial carbon cycle model developed by \citet{AND2013}, refer to Appendix B of \cite{DOA2018}.

The current model incorporates the industrial carbon transfer activities (such as fossil fuel combustion) that lead to the linear transfer of carbon geological stock ($A_4$) to the atmosphere. DAC intervention is introduced by incorporating an extra box to store carbon ($A_5$) directly sequestered from the atmosphere. The rate of transfer is also assumed to be linear. The system also introduces a possible leak, which can be used to assess the CDR performance of the system even in the presence of such a leak.

Together with the power-law approximation of the transfer rates in the pre-industrial model and the linear functions of carbon transfer from $A_4$ to $A_2$, and $A_5$ to $A_4$, we form the system of ODEs of the carbon cycle system with DAC intervention: 
\begin{equation}\label{eq:DAC}
\left.
 \begin{array}{cl}
\dot{A_1}&=k_1A_1^{p_1}A_2^{q_1} - k_2A_1^{p_2}A_2^{q_2}  \\
\dot{A_2 }&= k_2A_1^{p_2}A_2^{q_2} - k_1A_1^{p_1}A_2^{q_1} -a_mA_2 +a_m\beta A_3+k_4A_4-k_5A_2\\
\dot{A_3} &=a_mA_2 -a_m\beta A_3 \\
\dot{A_4} &= k_6A_5 - k_4A_4\\
\dot{A_5} &= k_5A_2 - k_6A_5
 \end{array}
 \right.
\end{equation} 

Table \ref{tab1}  provides a summary of the system parameters that will be referenced in the model's specification and analysis. 

\begin{table}
\begin{center}
\begin{minipage}{\textwidth}
\caption{Model parameters}\label{tab1}
\begin{tabular}{|c|l|}
\hline
\textbf{Symbol} & \textbf{Description}  \\ 
\hline
\multirow{3}{*}{$p_1$ and $p_2$}	&	Kinetic orders of land interaction		\\
\hhline{~}   &  $p_1$: Kinetic order of land photosynthesis interaction ($p$-interaction)  \\
\hhline{~}  &  $p_2$: Kinetic order of land respiration interaction ($r$-interaction) 	 \\
\hline
\multirow{3}{*}{$q_1$ and $q_2$}	&	Kinetic orders of atmosphere interaction		\\
\hhline{~}   &  $q_1$: Kinetic order of atmosphere photosynthesis interaction ($p$-interaction)  \\
\hhline{~}  &  $q_2$: Kinetic order of atmosphere respiration interaction ($r$-interaction) 	 \\
\hline
\multirow{1}{*}{$p_2-p_1$}	&	Land $r$-$p$-interaction difference	\\
\hline
\multirow{1}{*}{$q_2-q_1$}	&	Atmosphere $r$-$p$-interaction difference	\\
\hline
\multirow{3}{*}{$\displaystyle R_p=\frac{p_2-p_1}{q_2-q_1}$}	&		\\
\hhline{~}  &   Land-atmosphere $r$-$p$-intearction difference ratio	 \\
\hhline{~}  &   	 \\
\hline
\multirow{3}{*}{$\displaystyle R_q=\frac{q_2-q_1}{p_2-p_1}$}	&		\\
\hhline{~}  &   Atmosphere-land $r$-$p$-intearction difference ratio	 \\
\hhline{~}  &   	 \\
\hline
\end{tabular}
\end{minipage}
\end{center}
\end{table}

\subsection{Power-law kinetic representation of an Earth system model}

The analysis of the model involves constructing a \textbf{power-law kinetic representation}. Such a representation involves a chemical reaction network (CRN) with power-law kinetics that is dynamically equivalent to the original system (i.e., they share the same ODEs). The goal is to identify important system dynamic properties through its power-law kinetic representation without the need for numerical computations or simulations typically required for nonlinear ODEs. Leveraging tools and insights from CRNT, the analysis is conducted with minimal reliance on specific parameters, as it treats rate constants symbolically and avoids dependence on their numerical values. 

In the following section, we outline the process of constructing the CRN network for the model of interest and defining the power-law kinetics for this CRN.

\subsubsection{Chemical reaction network representation}

We outline some foundational concepts pertaining to CRNs. While these fundamentals are essential, they are not exhaustive and formally presented, especially in the context of the mathematical proofs discussed in this paper. For a formal treatment of the relevant concepts, notations, and results in CRNT necessary for proving our findings, readers are referred to Appendix \ref{appendix: CRNT}.

\subsubsection*{Chemical reaction network preliminaries}

A chemical reaction network or CRN is a finite set of interdependent reactions that happen simultaneously. In an abstract sense, it can serve as a representation of any system whose evolution is driven by the transformation of its elements into different elements. The fundamental element of a chemical reaction is the \textbf{species}. The chemical species can encompass a range of entities, including chemical elements, molecules, or proteins. In the present context, the species represent various carbon pools involved in the system. A \textbf{complex} is a nonnegative linear combination of the species. Put another way, a complex is the set of species with associated nonnegative coefficients (called \textbf{stoichiometric coefficients}). A chemical reaction is typically written as  
$$
\text{Reactant complex} \rightarrow \text{Product complex},
$$
where the set of species on the left side of the equation (\textbf{reactant complex}) are consumed or transformed to form the set of species on the right side (\textbf{product complex}). We can view every complex in a CRN as a vector in a vector space called \textbf{species space}, whose coordinates refer to the coefficients or stoichiometry of the different species. In this way, every reaction may also be associated with a vector, called \textbf{reaction vector}. A \textbf{reaction vector} is formed by subtracting the reactant complex vector from the product complex vector. For example, the following network is a CRN with five species ($A_1$, $A_2$, $A_3$, $A_4$, and $A_5$) and seven reactions. 

\begin{equation}\label{eq: CRN}
\begin{blockarray}{cccc}
\text{Reaction} & \text{Reactant} & \text{Product} & \text{Reaction vector}  \\
A_1+2A_2 \rightarrow 2A_1+ A_2 & [1, 2, 0, 0, 0]^\top & [2, 1, 0, 0, 0]^\top & [1, -1, 0, 0, 0]^\top \\
2A_1+A_2 \rightarrow A_1+ 2A_2 & [2, 1, 0, 0, 0]^\top & [1, 2, 0, 0, 0]^\top & [-1, 1, 0, 0, 0]^\top \\
A_2 \rightarrow A_3 & [0, 1, 0, 0, 0]^\top & [0, 0, 1, 0, 0]^\top & [0, -1, 1, 0, 0]^\top \\
A_3 \rightarrow A_2 & [0, 0, 1, 0, 0]^\top & [0, 1, 0, 0, 0]^\top & [0, 1, -1, 0, 0]^\top \\
A_4 \rightarrow A_2 & [0, 0, 0, 1, 0]^\top & [0, 1, 0, 0, 0]^\top & [0, 1, 0, -1, 0]^\top \\
A_2 \rightarrow A_5 & [0, 1, 0, 0, 0]^\top & [0, 0, 0, 0, 1]^\top & [0, -1, 0, 0, 1]^\top \\
A_5 \rightarrow A_4 & [0, 0, 0, 0, 1]^\top & [0, 0, 0, 1, 0]^\top & [0, 0, 0, 1, -1]^\top 
\end{blockarray}
\end{equation}

Viewed as a directed graph, a CRN is said to be \textbf{weakly reversible} if the existence of a path from one complex $C_i$ to complex $C_j$ implies the existence of a path from $C_j$ to $C_i$. An example of this is the CRN in (\ref{eq: CRN}).

A group of complexes that are connected by arrows is referred to as a \textbf{linkage class}. The CRN above has two linkage classes: $\{A_1+2A_2 \rightleftarrows 2A_1+A2 \}$ and $\{ A_2 \rightleftarrows A_3, A_4 \rightarrow A_2, A_2 \rightarrow A_5, A_5 \rightarrow A_4 \}$.

The span or the set of all possible linear combinations of the reaction vectors is called the \textbf{stoichiometric subspace} of the network. The \textbf{rank} of a CRN refers to the dimension of the stoichiometric subspace (i.e., the maximum number of linearly independent reaction vectors). Therefore, the  stoichiometric subspace of the CRN in (\ref{eq: CRN}) has 4 basis vectors. That is, the rank of the CRN is 4.

A key concept in CRNT is the nonnegative structural index called \textbf{deficiency}. It is calculated as the number of complexes $n$ minus the number of linkage classes $\ell$ and the rank $s$. This index, independent of network size, measures the ``linear independence'' of reactions: higher deficiency indicates less linear independence. Even large or complex CRNs can have a deficiency of zero (\cite{SHFE2012}).

The CRN in (\ref{eq: CRN}) is a deficiency-zero CRN since it has 6 complexes, 2 linkage classes, and its rank is 4.

\subsubsection*{CRN representation  of the DAC model}

Given a box model of a global carbon cycle, a corresponding \textbf{CRN representation} can be set up using the procedure proposed by Arceo et al. \cite{AJMSM2015}. In this approach, one associates the reaction $A_i \rightarrow A_j$ to the carbon transfer from pool $A_i$ to pool $A_j$. Moreover, if the carbon transfer is influenced by some carbon pools (as indicated by the dashed arrows in the schematic diagram), say $\sum A_k$, all these species are added to both sides of $A_i \rightarrow A_j$ to form the chemical reaction 
\begin{equation}\label{eq:CRNrep}
\underbrace{A_i+ \left( \sum A_k \right)}_{\mbox{reactant complex}} \rightarrow \underbrace{A_j + \left( \sum A_k \right) }_{\mbox{product complex}}
\end{equation}
This process preserves the coordinates of the reaction vectors, which is important in describing the dynamics of the whole system (see Section \ref{sec:PL}). 

In the current model of the carbon cycle with DAC (see Fig. \ref{fig:DACPL}), the transfer of carbon from the atmosphere $A_2$ to land $A_1$ is influenced by $A_1$ and $A_2$. According to (\ref{eq:CRNrep}), the reaction associated with this process is  $A_2+ \{ A_1+A_2 \} \rightarrow A_1+\{A_1+A_2\}$ or simply $A_1+2A_2 \rightarrow 2A_1+A_2$. The carbon transfer from land to atmosphere is represented by the reaction $A_1 +A_2 \to 2A_2$ because the process is influenced by $A_2$.  This reaction can be translated (as described in \cite{JOHNSTON2014}), but without changing the stoichiometry, by adding $A_1$ to both sides of the reaction. The translation lowers the deficiency of the network from one (as done in \cite{DOA2018}) into zero. Hence, the atmosphere-land carbon transfer is depicted by the reversible reaction $A_1+2A_2 \rightleftarrows 2A_1+A_2$. 

In summary, the CRN representation of the DAC model is precisely the network specified in (\ref{eq: CRN}).

\subsubsection{Power-law kinetics of the DAC model}\label{sec:PL}

Given the CRN representation of the DAC model, we can associate the corresponding transfer power-law rates (provided in Figure \ref{fig:DACPL}) to each reaction:

\begin{equation}\label{eq:DACrates}
\begin{blockarray}{cc}
\text{Reaction} & \text{Reaction rates}  \\
R_1: A_1+2A_2 \rightarrow 2A_1+ A_2 & k_1A_1^{p_1}A_2^{q_1} \\
R_2: 2A_1+A_2 \rightarrow A_1+ 2A_2 & k_2A_1^{p_2}A_2^{q_2} \\
R_3:A_2 \rightarrow A_3 & a_m A_2 \\
R_4:A_3 \rightarrow A_2 & a_m\beta A_3 \\
R_5:A_4 \rightarrow A_2 & k_4 A_4 \\
R_6:A_2 \rightarrow A_5 & k_5 A_2 \\
R_7:A_5 \rightarrow A_4 & k_6 A_5
\end{blockarray}
\end{equation}

The behavior of a network is described by a system of nonlinear ODEs, which are derived from the CRN and the definition of reaction rate functions, or kinetics. This system of ODEs can be compactly represented using vector notation, encapsulating a collection of equations, each of which describes the evolution of a specific species within the CRN. Formally, the system can be expressed as:

\begin{equation}\label{eq:ode}
\dot{x}= \sum_{\mathscr{R}} \kappa_{y \rightarrow y'} (x) (y'-y)    
\end{equation}

Here, $x$ represents the vector of species concentrations, and the overdot signifies the derivative with respect to time. The scalar $\kappa_{y \rightarrow y'} (x)$ captures the rate at which the reaction $y \rightarrow y'$ proceeds. The term $(y'-y)$ corresponds to the reaction vector associated with the reaction $y \rightarrow y'$. The symbol $\mathscr{R}$ denotes the set of all reactions in the CRN, and its inclusion under the summation indicates that the sum is taken over all reactions in the network. A \textbf{positive steady state} of the system is a species composition $x$ with positive concentrations for which $\dot{x}=0$.

Equation (\ref{eq:ode}) highlights the role of the stoichiometric subspace in a CRN, as it constrains the system's dynamics. While species concentrations evolve over time, their trajectories are not free to roam throughout the species space. Instead, the concentrations are confined to translations of the stoichiometric subspace, termed as \textbf{stoichiometric compatibility classes}.

The CRN representation of the DAC model (in (\ref{eq:CRNrep})) must be endowed with \textbf{power-law kinetics} in order to reflect the ODE system in Equation (\ref{eq:DAC}). The power-law functions of a CRN representation are encoded using the \textbf{kinetic order matrix} $F$, where entry $F_{ij}$ encodes the kinetic order of the $j$th species in the $i$th reaction. Thus, for power law kinetic systems, the rate functions described in the ODE system in (\ref{eq:ode}) can be specified as

\begin{equation}\label{eq:PLK}
 \displaystyle K_{i}(x)=\kappa_i x^{(F_{i,*})^\top} \quad \text{for all } i \in \mathscr{R},
\end{equation}
where $F_{i,*}$ is the row vector containing the kinetic orders of the species of the reactant complex in the $i$th reaction. 

\begin{remark}
Equation (\ref{eq:PLK}) involves the operation where a vector is raised to a vector. In general, we define $x^y \in \mathbb{R}$ for $x,y\in \mathbb{R}^\mathscr{I}$ as
$$
x^y=\prod_{i \in \mathscr{I}}x_i^{y_i}.
$$
\end{remark}

\noindent Given the power-law rates in (\ref{eq:DACrates}), the kinetic order matrix of the current system is 

\begin{equation*}
F= \begin{blockarray}{cccccl}
A_1 & A_2 & A_3  & A_4 & A_5 &  \\
\begin{block}{[ccccc]l}
p_1 & q_1 & 0 & 0 & 0 & R_1 \\
p_2 & q_2 & 0 & 0 & 0 &  R_2 \\
0 & 1 & 0 &  0 & 0 & R_3 \\
0 & 0 & 1 &  0 & 0 & R_4 \\
0 & 0 & 0 &  1 & 0 & R_5 \\
0 & 1 & 0 &  0 & 0 & R_6 \\
0 & 0 & 0 &  0 & 1 & R_7 \\
\end{block}
\end{blockarray}.
\end{equation*}

Let $N$ be the matrix, called \textbf{stoichiometric matrix}, whose columns are the reaction vectors of the CRN. Then the system of ODEs specified in Equation (\ref{eq:ode}) can be rewritten as
\begin{equation}\label{eq:ODE2}
\dot{x}=NK(x),  
\end{equation}
where $K(x)$ is the vector which contains the reaction rates. It can be easily checked that if the stoichiometric matrix for the CRN representation of the DAC model and the vector the containing the reaction rates are specified, the ODE system in (\ref{eq:ODE2}) is precisely the system in (\ref{eq:DAC}), thereby verifying the dynamical equivalence of the power-law kinetic representation and the original system.

\begin{remark}
    Henceforth, we refer to the power-law kinetic representation of the model of carbon cycle system with DAC as the \textbf{DAC system}.
\end{remark}

\section{Steady-state analysis of the DAC system}

Since the power-law kinetic representation of the DAC system is weakly reversible and has zero deficiency, current theorems on power law kinetic systems on deficiency-zero networks \cite{TALABIS2017,MENDOZA2018} ensure the presence of a set of positive equilibria of the system. 

\begin{proposition}
The DAC system has a positive steady state. 
\end{proposition}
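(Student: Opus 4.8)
The plan is to obtain this as a direct consequence of the power-law analogue of the classical Deficiency Zero Theorem, namely the existence results for deficiency-zero power-law kinetic systems recorded in \cite{TALABIS2017,MENDOZA2018}. These guarantee a positive equilibrium (in fact a whole positive-dimensional family of them, one in each stoichiometric compatibility class) for a power-law kinetic system provided that (i) the underlying network is weakly reversible, (ii) its deficiency is zero, and (iii) the kinetics lies in the class to which the theorem applies, namely power-law systems with reactant-determined kinetic orders (PL-RDK). Hypotheses (i) and (ii) have already been established for the DAC network in the preceding discussion, so the real work is to verify (iii) and then invoke the theorem.

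First I would check that the DAC system is PL-RDK, i.e.\ that any two reactions sharing the same reactant complex are assigned the same row of kinetic orders. Reading off the kinetic order matrix $F$, the six reactant complexes are $A_1+2A_2$, $2A_1+A_2$, $A_2$, $A_3$, $A_4$, and $A_5$; the only complex serving as the reactant of more than one reaction is $A_2$, which is the reactant of both $R_3$ and $R_6$. Since rows $R_3$ and $R_6$ of $F$ coincide (both equal $[0,1,0,0,0]$), the reactant-determined property holds and the DAC system is PL-RDK.

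Next I would assemble the structural data already computed --- six complexes, two linkage classes, and rank four, hence deficiency $n-\ell-s = 6-2-4 = 0$ --- together with weak reversibility, and feed these into the cited deficiency-zero existence theorem for PL-RDK systems. Because all the hypotheses are met, the theorem returns a nonempty set of positive equilibria, and in particular the single positive steady state asserted in the proposition.

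The main obstacle is conceptual rather than computational: unlike the mass-action setting, weak reversibility together with zero stoichiometric deficiency does not by itself force the existence of positive equilibria for an arbitrary power-law system, so one must be careful that the DAC kinetics genuinely falls within the scope of the cited theorems. The crux is therefore the PL-RDK verification (and, depending on the precise formulation invoked, the accompanying kinetic-order/linear-independence condition on the reactant interactions), after which the existence conclusion is immediate.
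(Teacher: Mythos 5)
Your proposal takes essentially the same route as the paper: the paper's justification for this proposition is precisely the invocation of the deficiency-zero existence theorems for power-law kinetic systems \cite{TALABIS2017,MENDOZA2018}, applied on the grounds that the DAC network is weakly reversible with deficiency $6-2-4=0$. Your additional explicit verification that the system is PL-RDK (the only branching reactant complex is $A_2$, shared by $R_3$ and $R_6$, whose kinetic-order rows both equal $[0,1,0,0,0]$) is a hypothesis the paper leaves implicit, and you check it correctly.
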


In fact, for a class of DAC system (later defined as positive and negative DAC systems), a parameterization of its steady state is provided in Proposition \ref{prop:steadystates}.

\subsection{Multistationarity analysis}

 A CRN is said to be \textbf{multistationary} or has the capability for multiple steady states if there is at least one stoichiometric compatibility class with at most two distinct positive steady states. Otherwise, the CRN is \textbf{monostationary}. 

It is found that the capacity of the DAC system to admit multiple steady states depends on values of the kinetic orders $p_1,p_2,q_1$, and $q_2$. More precisely, the multistationarity property is quickly decided based on the sign of the ratio $R_p$ or $R_q$ defined in Table \ref{tab1}. The discussion centers around these two values due to the structure of the so-called \textit{kinetic flux subspace} $\widetilde{S}$ of the system. Essentially, the kinetic flux subspace of a system is the kinetic analogue of the stoichiometric subspace. If the stoichiometric subspace is the span of the reaction vectors, the kinetic flux subspace is the span of the fluxes in terms of the kinetic vectors. Interestingly, a mathematical description of the set of positive steady states of a chemical kinetic system can be written as a vector element of the space that is perpendicular (i.e., orthogonal complement) to the system’s kinetic flux subspace: If the vector $x^*$ is any positive equilibrium of a system, the set of positive equilibria consists of vectors $x$ such that the vector $\log (x) - \log (x^*)$ resides in the orthogonal complement of kinetic flux subspace. (See Appendix \ref{appendix:PLP}.)

\begin{proposition}
For the DAC system, 
$$
(\widetilde{S})^\perp = \text{span }
\left\lbrace
\begin{bmatrix}
-1 \\
R_p \\
R_p \\
R_p \\
R_p
\end{bmatrix} \right\rbrace  
\text{where } R_p :=\dfrac{p_2-p_1}{q_2-q_1}, q_2 \neq q_1.
$$
Similarly,
$$
(\widetilde{S})^\perp = 
\text{span }
\left\lbrace
\begin{bmatrix}
-R_q \\
1 \\
1 \\
1 \\
1
\end{bmatrix}
\right\rbrace\quad \text{where } R_q :=\dfrac{q_2-q_1}{p_2-p_1}, p_2 \neq p_1.
$$
\end{proposition}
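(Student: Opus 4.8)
The plan is to compute the kinetic flux subspace $\widetilde{S}$ directly from its definition as the span of the kinetic reaction vectors, and then extract its orthogonal complement by solving the resulting homogeneous linear system. First I would observe that, since $A_2$ is the reactant of both $R_3$ and $R_6$ with the identical kinetic-order row $(0,1,0,0,0)$, the kinetics is reactant-determined, so every complex carries a well-defined kinetic-order vector $\widetilde{y}$ read off from the corresponding row of $F$. For each reaction $y\to y'$ the associated kinetic reaction vector is then $\widetilde{y}'-\widetilde{y}$, and $\widetilde{S}$ is their span. This is the one genuinely conceptual point: $\widetilde{S}$ must be built from differences of \emph{kinetic} complexes, which for $R_1,R_2$ differ from the ordinary reaction vectors.

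Carrying this out, the seven reactions yield the kinetic reaction vectors $g_1=(p_2-p_1,\,q_2-q_1,0,0,0)^\top$ from $R_1$ (with $R_2$ giving $-g_1$), $g_3=(0,-1,1,0,0)^\top$ from $R_3$ (with $R_4$ giving $-g_3$), $g_5=(0,1,0,-1,0)^\top$ from $R_5$, $g_6=(0,-1,0,0,1)^\top$ from $R_6$, and $g_7=(0,0,0,1,-1)^\top$ from $R_7$. I would then note the single dependency $g_5+g_6+g_7=0$, so that $\{g_1,g_3,g_5,g_6\}$ spans $\widetilde{S}$. These four vectors are linearly independent: $g_1$ because of its nonzero first coordinate whenever $(p_2-p_1,q_2-q_1)\neq(0,0)$, and $g_3,g_5,g_6$ because each carries a distinct nonzero coordinate in positions $3,4,5$ respectively. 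Hence $\dim\widetilde{S}=4$ and $\dim(\widetilde{S})^\perp=1$, consistent with the claimed one-dimensional span.

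To pin down $(\widetilde{S})^\perp$, I would impose $v\cdot g_i=0$ for $i\in\{1,3,5,6\}$ on a vector $v=(v_1,\dots,v_5)^\top$. The equations from $g_3,g_5,g_6$ force $v_3=v_2$, $v_4=v_2$, $v_5=v_2$, i.e. $v_2=v_3=v_4=v_5$, while $g_1$ gives $(p_2-p_1)v_1+(q_2-q_1)v_2=0$. When $q_2\neq q_1$ I solve this last relation for $v_2=-\tfrac{p_2-p_1}{q_2-q_1}v_1=-R_p v_1$ and normalize $v_1=-1$ to obtain $(-1,R_p,R_p,R_p,R_p)^\top$; when instead $p_2\neq p_1$ I solve for $v_1=-R_q v_2$ and normalize $v_2=1$ to obtain $(-R_q,1,1,1,1)^\top$, yielding the two stated forms.

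As a cross-check I would confirm the same vector against the steady-state characterization recalled just before the proposition: writing $w=\log x-\log x^*$, the equilibrium conditions of (\ref{eq:DAC}) collapse (the land terms cancel by the first equation, the $A_3$ balance and the $A_4,A_5$ balances decouple) to $(p_2-p_1)w_1+(q_2-q_1)w_2=0$ together with $w_2=w_3$, $w_4=w_5$, $w_2=w_5$, which is exactly the system above. The main obstacle is therefore not the algebra but getting the definition right, namely identifying $\widetilde{S}$ as the span of the differences of kinetic complexes and verifying the reactant-determined property that makes those complexes well defined; once the four generators are in hand the rest is routine, and the provisos $q_2\neq q_1$ and $p_2\neq p_1$ are precisely what keep $g_1\neq 0$ and make the respective normalizations legitimate.
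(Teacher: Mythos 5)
Your proposal is correct and follows essentially the same route as the paper: the paper computes $\widetilde{S}=\operatorname{Im}(\widetilde{Y}I_a)$, whose columns are exactly the kinetic reaction vectors $\widetilde{y}'-\widetilde{y}$ you list as $g_1,\dots,g_7$, and then reads off the orthogonal complement just as you do by solving $v\cdot g_i=0$. Your explicit verification of the reactant-determined property and the cross-check against the steady-state equations are welcome additions, but they do not change the substance of the argument.
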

\begin{proof}
In a weakly reversible CRN,  $\widetilde{S}=\text{Im } \left( \widetilde{Y} I_a \right)$ (See Appendix for the definition of these matrices). For the DAC system,

$$
\widetilde{Y}=\kbordermatrix{
& A_1+2A_2 & 2A_1 + A_2 & A_2 & A_3 & A_4 & A_5\\
A_1 & p_1 & p_2 & 0 & 0 & 0 & 0 \\
A_2 & q_1 & q_2 & 1 & 0 & 0 & 0 \\
A_3 & 0 & 0 & 0 & 1 & 0 & 0 \\
A_4 & 0 & 0 & 0 & 0 & 1 & 0 \\
A_5 & 0 & 0 & 0 & 0 & 0 & 1 \\
} 
\text{and } 
I_a=\kbordermatrix{
& R_1 & R_2 & R_3 & R_4 & R_5 & R_6 & R_7 \\
A_1+2A_2 & -1 & 1 & 0 & 0 & 0 & 0 & 0 \\
2A_1+A_2 & 1 & -1 & 0 & 0 & 0 & 0 & 0 \\
A_2 & 0 & 0 & -1 & 1 & 1 & -1 & 0 \\
A_3 & 0 & 0 & 1 & -1 & 0 & 0 & 0 \\
A_4 & 0 & 0 & 0 & 0 & -1 & 0 & 1 \\
A_5 & 0 & 0 & 0 & 0 & 0 & 1 & -1 \\
}
$$
Hence,
$$
\widetilde{Y}  I_a =
\begin{bmatrix}
p_2-p_1 & p_1-p_2  & 0 & 0 & 0 & 0 & 0 \\
q_2-q_1 & q_1-q_2  & -1 & 1 & 1 & -1 & 0 \\
0 & 0  & 1 & -1 & 0 & 0 & 0 \\
0 & 0  & 0 & 0 & -1 & 0 & 1 \\
0 & 0  & 0 & 0 & 0 & 1 & -1 \\
\end{bmatrix}
$$
So,
$$
\widetilde{S}=\text{Im } \left( \widetilde{Y}  I_a \right) = \text{span }
\left\lbrace
\begin{bmatrix}
p_2-p_1 \\
q_2-q_1 \\
0 \\
0 \\
0
\end{bmatrix},
\begin{bmatrix}
0 \\
-1 \\
1 \\
0 \\
0
\end{bmatrix},
\begin{bmatrix}
0 \\
0 \\
1 \\
-1 \\
0
\end{bmatrix},
\begin{bmatrix}
0 \\
0 \\
0 \\
-1 \\
1
\end{bmatrix}
\right\rbrace.
$$
From here, it can be easily computed that $(\widetilde{S})^\perp = \text{span } 
\left\lbrace
\begin{bmatrix}
-1 & R_p & R_p & R_p & R_p
\end{bmatrix}^\top
\right\rbrace$  or $(\widetilde{S})^\perp =
\text{span } 
\left\lbrace
\begin{bmatrix}
-R_q & 1 & 1 & 1 & 1
\end{bmatrix}^\top
\right\rbrace
$.
\end{proof}

The ratio $R_p$ was crucial in the analysis of the power-law kinetic representation of the pre-industrial model of Anderies et al. done by \citet{FORMEN2023}. They identified and analyzed three distinct classes of Anderies systems, characterized by the sign of the ratio $R_p$. Here, the analysis of the DAC system will be largely characterized in a similar way. For convenience, we will call the different system classes of DAC as positive, negative, $P$-null and $Q$-null DAC systems:

\begin{definition}
We call the set of DAC systems with $R_p > 0$ (or $R_q>0$) as \textbf{positive DAC systems}. The set of DAC systems where $R_p < 0$ (or $R_q<0$) are termed \textbf{negative DAC systems}. DAC systems with $p_2-p_1=0$ and $q_2-q_1\neq 0$ are $\bm{P}$\textbf{-null DAC systems}. DAC systems with $q_2-q_1=0$ and $p_2-p_1\neq 0$ are $\bm{Q}$-\textbf{null DAC systems}. 
\end{definition}

\citet{MURE2012} provided a simple criterion to assess the uniqueness of a (complex balanced) steady state in a deficiency zero network. This is done by analyzing the sign vector connections between the stoichiometric subspace and the orthogonal complement of the kinetic flux subspace. 

\begin{theorem}(Proposition 3.2 of \cite{MURE2012}) \label{theorem:mure}
If for a weakly reversible generalized mass action system with $\sigma(\mathcal{S}) \cap \sigma(\widetilde{\mathcal{S}})^\perp \neq \{ 0 \}$, then there is a stoichiometric class with more than one complex balanced equilibrium.
\end{theorem}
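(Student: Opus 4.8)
The statement is a capacity result: the sign-vector overlap $\sigma(\mathcal{S}) \cap \sigma((\widetilde{\mathcal{S}})^\perp) \neq \{0\}$ should force the existence of a stoichiometric compatibility class carrying two distinct complex balanced equilibria for a suitable assignment of rate constants. The plan is to reduce the conclusion to a purely algebraic existence problem and then solve that problem by an explicit componentwise construction driven by the common sign vector. First I would recall the parameterization of the complex balanced locus stated just above the theorem: once a weakly reversible GMA system admits a complex balanced equilibrium $x^*$, every complex balanced equilibrium $x$ is characterized by $\log x - \log x^* \in (\widetilde{\mathcal{S}})^\perp$, while two positive states lie in the same stoichiometric compatibility class exactly when their difference lies in $\mathcal{S}$. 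Hence producing a class with two complex balanced equilibria amounts to exhibiting a positive reference state $x^*$ and a nonzero vector $\mu \in (\widetilde{\mathcal{S}})^\perp$ for which the vector with entries $x_i^*(e^{\mu_i}-1)$ belongs to $\mathcal{S}$, since that vector is precisely the displacement $x - x^*$ when $x_i := x_i^* e^{\mu_i}$.

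Next I would exploit the hypothesis directly. Choose a nonzero $\tau \in \sigma(\mathcal{S}) \cap \sigma((\widetilde{\mathcal{S}})^\perp)$, together with witnesses $s \in \mathcal{S}$ and $\mu \in (\widetilde{\mathcal{S}})^\perp$ satisfying $\operatorname{sign}(s) = \operatorname{sign}(\mu) = \tau$ coordinatewise. The key observation is that $e^{\mu_i}-1$ carries the same sign as $\mu_i$, hence the same sign $\tau_i$ as $s_i$, for every $i$. This lets me define $x^*$ entrywise by $x_i^* := s_i/(e^{\mu_i}-1)$ whenever $\tau_i \neq 0$ (the quotient of two quantities of equal sign, hence strictly positive) and $x_i^* := 1$ whenever $\tau_i = 0$ (in which case $s_i = 0$ and $\mu_i = 0$ automatically, so both sides vanish). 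By construction $x_i^*(e^{\mu_i}-1) = s_i$ in every coordinate, so the displacement equals $s \in \mathcal{S}$. Then $x := (x_i^* e^{\mu_i})_i$ and $x^*$ are positive and distinct (as $\mu \neq 0$), lie in a common stoichiometric compatibility class, and both satisfy $\log(\cdot) - \log(x^*) \in (\widetilde{\mathcal{S}})^\perp$.

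Finally I must certify that $x^*$ is a genuine complex balanced equilibrium, so that $x$ is one as well by the parameterization. For this I would invoke the fact that, on a weakly reversible network, any prescribed positive composition can be realized as a complex balanced equilibrium through a suitable choice of rate constants: the complex balancing equations are linear in the rate constants for fixed $x^*$, and weak reversibility guarantees a strictly positive solution via the spanning-tree structure of the kernel of the associated Laplacian. Fixing such rate constants, $x^*$ and $x$ become two complex balanced equilibria in the same class, which establishes the claim. I expect this last step to be the main obstacle: the sign-vector construction is elementary, but the logical force of the theorem rests on the exponential (toric) description of the complex balanced locus together with the freedom to realize the constructed $x^*$ via rate constants, so the conclusion is properly a capacity statement quantified over rate constants rather than an assertion for every fixed system. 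Some additional care is needed at the coordinates with $\tau_i = 0$ to keep $x^*$ strictly positive and to keep the displacement exactly equal to $s$.
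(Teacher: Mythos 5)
Your proof is correct, but note that the paper does not prove this statement at all: it is imported verbatim as Proposition 3.2 of \cite{MURE2012}, so there is no in-paper argument to compare against. What you have written is essentially the original M\"uller--Regensburger proof of that proposition: the componentwise construction $x^*_i = s_i/(e^{\mu_i}-1)$ on the support of the common sign vector, the observation that $e^{\mu_i}-1$ and $\mu_i$ share a sign, and the final realization of $x^*$ as a complex balanced equilibrium by choosing rate constants via a positive vector in the kernel of the incidence map (which weak reversibility supplies). Your closing remark is also the right reading of the statement: as phrased in the paper it looks like an assertion about a fixed system, but it is properly a capacity result quantified over rate constants, which is exactly how the paper uses it (its notion of multistationarity in Definition A.10 is likewise quantified over rate constants).
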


Due to this, we have the following result:

\begin{proposition}
A positive DAC system is multistationary.
\end{proposition}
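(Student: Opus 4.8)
The plan is to apply Theorem~\ref{theorem:mure}. That result reduces the question of non-uniqueness of positive (complex balanced) equilibria to a purely combinatorial check on sign vectors: it suffices to exhibit a single \emph{nonzero} sign vector lying in both $\sigma(\mathcal{S})$ and $\sigma\big((\widetilde{\mathcal{S}})^\perp\big)$. Once such a common sign vector is found, the theorem produces a stoichiometric compatibility class carrying more than one complex balanced equilibrium; since complex balanced equilibria are in particular positive steady states, this gives two distinct positive steady states in one compatibility class, which is exactly the definition of multistationarity. So everything hinges on the sign-vector intersection.

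First I would record $\sigma\big((\widetilde{\mathcal{S}})^\perp\big)$ from the preceding Proposition. For a positive DAC system $R_p>0$, so the generator $[-1, R_p, R_p, R_p, R_p]^\top$ has sign pattern $(-,+,+,+,+)$, and therefore $\sigma\big((\widetilde{\mathcal{S}})^\perp\big)=\{0,\ (-,+,+,+,+),\ (+,-,-,-,-)\}$. It then remains to realize one of these two patterns inside the stoichiometric subspace $\mathcal{S}$, that is, the span of the reaction vectors in (\ref{eq: CRN}). Taking the reaction vectors of $R_1,R_3,R_5,R_6$ as a basis of $\mathcal{S}$ and solving the associated strict sign inequalities, one finds a witness such as the combination with coefficients $-10,1,-1,1$, namely $[-10,\,7,\,1,\,1,\,1]^\top$, whose sign vector is $(-,+,+,+,+)$. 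This vector lies in $\sigma(\mathcal{S})\cap\sigma\big((\widetilde{\mathcal{S}})^\perp\big)$ and is nonzero, so the hypothesis of Theorem~\ref{theorem:mure} is met and the conclusion follows.

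The computations are routine; the point needing care is the orientation bookkeeping. The pattern $(-,+,+,+,+)$ is available in $\sigma\big((\widetilde{\mathcal{S}})^\perp\big)$ \emph{precisely because} $R_p>0$ --- this positivity is exactly what forces the last four entries of the generator to share a common sign --- so the argument genuinely invokes the defining hypothesis of a positive DAC system rather than a generic network property. What must then be confirmed is that this same pattern, and not merely its negation, is realizable within $\mathcal{S}$; since the witness is displayed as an explicit linear combination of reaction vectors it automatically belongs to $\mathcal{S}$, so this is immediate. I expect this orientation matching to be the only genuine obstacle, the remainder being the two bookkeeping computations of sign vectors.
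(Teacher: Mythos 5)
Your proposal is correct and follows essentially the same route as the paper: both invoke Theorem~\ref{theorem:mure}, use $R_p>0$ to pin down the sign patterns $(-,+,+,+,+)$ and $(+,-,-,-,-)$ in $\sigma\bigl((\widetilde{\mathcal{S}})^\perp\bigr)$, and exhibit an explicit linear combination of the same four reaction vectors realizing one of these patterns in $\mathcal{S}$ (your witness $[-10,7,1,1,1]^\top$ is just the negative of the paper's choice, which matches $(+,-,-,-,-)$ instead).
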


\begin{proof}
If $R_p>0$,   $\text{sign }(\widetilde{S})^\perp = \left\lbrace
\begin{bmatrix}
- \\
+ \\
+ \\
+ \\
+
\end{bmatrix} ,
\begin{bmatrix}
+ \\
- \\
- \\
- \\
-
\end{bmatrix} 
\right\rbrace
$. The stoichiometric subspace $S$ is spanned by the following vectors:
$$
\left\lbrace
\begin{bmatrix}
1 \\
-1 \\
0 \\
0 \\
0
\end{bmatrix},
\begin{bmatrix}
0 \\
-1 \\
1 \\
0 \\
0
\end{bmatrix},
\begin{bmatrix}
0 \\
1 \\
0 \\
-1 \\
0
\end{bmatrix},
\begin{bmatrix}
0 \\
-1 \\
0 \\
0 \\
1
\end{bmatrix}
\right\rbrace.
$$
Let $x \in S$ where
$$
x= a_1 \begin{bmatrix}
1 \\
-1 \\
0 \\
0 \\
0
\end{bmatrix}+
a_2 \begin{bmatrix}
0 \\
-1 \\
1 \\
0 \\
0
\end{bmatrix}+
a_3 \begin{bmatrix}
0 \\
1 \\
0 \\
-1 \\
0
\end{bmatrix}+
a_4 \begin{bmatrix}
0 \\
-1 \\
0 \\
0 \\
1
\end{bmatrix}=
\begin{bmatrix}
a_1 \\
-a_1-a_2+a_3-a_4 \\
a_2 \\
-a_3 \\
a_4
\end{bmatrix}.
$$
Choose $a_1>0, a_2<0,a_3>0,a_4<0,$ and $a_1+a_2>a_3-a_4$ so that $\text{sign} (x)= 
\begin{bmatrix}
+ \\
- \\
- \\
- \\
-
\end{bmatrix} 
$
and thus, $\text{sign} (S) \cap \text{sign} (\widetilde{S})^\perp \neq \{ 0 \}$. Therefore, any positive DAC system is multistationary. 
\end{proof}

For negative DAC system where $R_p<0$, we have  $\text{sign }(\widetilde{S})^\perp = \left\lbrace
\begin{bmatrix}
- \\
- \\
- \\
- \\
-
\end{bmatrix} ,
\begin{bmatrix}
+ \\
+ \\
+ \\
+ \\
+
\end{bmatrix} 
\right\rbrace
$. In order for $x\in S$ to have similar signs for all its components, say positive, necessarily
$$
a_1>0, a_2>0,a_3<0, a_4>0.
$$
However, the second component $-a_1-a_2+a_3-a_4<0$. Hence, it is not possible for a uniform positive sign for all components of $x$. Similarly, it is not possible to obtain a vector $x \in S$ with negative signs in all its components. Thus, $\text{sign }(S) \cap \text{sign }(\widetilde{S})^\perp = \{ 0 \}$ and Theorem \ref{theorem:mure} does not apply. For negative DAC systems, we turn to injectivity analysis.

A CRN with stoichiometric matrix $N$ is \textbf{injective} if for any distinct stoichiometrically compatible species vectors $x$ and $y$, we have $NK(x) \neq NK(y)$ for all kinetics $K$ endowed on the CRN. Note that if a CRN is injective, then it is monostationary. In other words, an injective CRN cannot support multiple positive steady states for any rate constants. Wiuf and Feliu \cite{WIUF2013, FELIU2013} established a criterion to identify if a system is injective. 

Let $M=N \cdot \text{diag}(z) \cdot F \; \text{diag}(k)$, where $N$ represents the stoichiometric matrix and $F$ is the kinetic order matrix of the PLK system. Consider the symbolic matrix $M^*$ created by using symbolic vectors $k=(k_1,\dots,k_m)$ and $z=(z_1,\dots,z_r)$. Assume $\{ \omega^1,\dots,\omega^d \}$ forms a basis of the left kernel of $N$, and $i_1,\dots,i_d$ represent row indices. Define the $m \times m$ matrix $M^*$ by substituting the $i_j$-th row of $M$ with $\omega^j$.

\begin{theorem}(\cite{WIUF2013,FELIU2013})\label{theorem:wiuffeliu}
The interaction network with power law kinetics and fixed kinetic orders is injective if and only if the determinant of $M^*$ is a non-zero homogeneous polynomial with all coefficients being positive or all being negative. 
\end{theorem}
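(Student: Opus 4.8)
The plan is to read $M$ as the symbolic Jacobian of the species-formation map $f(x)=NK(x)$ and to exploit the monomial structure of power-law kinetics so that finite differences, not merely infinitesimal ones, are controlled by the same matrix family. Differentiating $K_j(x)=\kappa_j x^{(F_{j,*})^\top}$ gives $\partial K_j/\partial x_i = F_{ji}K_j(x)/x_i$, so the Jacobian of $f$ is $N\,\text{diag}(K(x))\,F\,\text{diag}(1/x)$; replacing the reaction-rate values and the reciprocal concentrations by the symbolic positive vectors $z$ and $k$ turns this into $M$. By definition the system is \emph{not} injective exactly when, for some rate constants, there are distinct positive stoichiometrically compatible vectors $x,y$ (so $x-y\in\text{Im}\,N$) with $NK(x)=NK(y)$. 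The engine of the proof is an exact identity in logarithmic coordinates: with $\mu:=\log x-\log y$ one has $K_j(x)-K_j(y)=K_j(y)\bigl(e^{(F\mu)_j}-1\bigr)$, and since $g(t):=(e^{t}-1)/t>0$ for every real $t$, this factors as $K(x)-K(y)=\text{diag}(K(y))\,\text{diag}(g(F\mu))\,F\,\mu$, with $g$ applied componentwise. Hence $NK(x)=NK(y)$ is equivalent to a member of the matrix family defining $M$ annihilating the nonzero vector $\mu$.

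Next I would bring in the compatibility constraint $x-y\in\text{Im}\,N$ by bordering. Writing $\{\omega^1,\dots,\omega^d\}$ for a basis of the left kernel of $N$, compatibility is the linear system $\omega^j\cdot(x-y)=0$, and the same factorization gives $x-y=\text{diag}(w)\,\mu$ for a positive vector $w$, so these too become linear constraints on $\mu$. I would then assemble the $s=n-d$ independent reaction-balance equations together with the $d$ conservation equations into a single $n\times n$ matrix, which is precisely $M^{*}$: the rows indexed by $i_1,\dots,i_d$ carry the conservation functionals $\omega^j$, and the remaining rows carry the corresponding rows of $M$. By construction the existence of a nonzero $\mu$ solving the combined system is equivalent to $M^{*}$ being singular, i.e. to $\det M^{*}=0$ at the associated positive parameter values.

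The third step converts ``singular for some admissible parameters'' into the coefficient condition. Because injectivity is demanded for \emph{all} rate constants, the reaction-rate values $z=K(y)$ sweep out the whole positive orthant as $\kappa$ and $y$ vary, and the reciprocal-concentration variables $k$ range freely as well; thus the network is non-injective for some rate constant precisely when $\det M^{*}$ has a zero in the positive orthant of its symbolic variables. Now $\det M^{*}$ is a polynomial in $z$ and $k$, and it is homogeneous: each kept row is linear in $z$ while the $d$ replaced rows contribute constants, so every monomial has total degree $s$ in the reaction-rate variables. I would finish with the elementary positivity fact that a nonzero polynomial is of one strict sign and nonvanishing throughout the positive orthant if and only if all of its coefficients share a single sign — since distinct monomials are positively independent, any two coefficients of opposite sign can be made to cancel at a suitable positive point, yielding a zero of $\det M^{*}$. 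Contrapositively, injectivity holds iff $\det M^{*}$ is a nonzero homogeneous polynomial with all coefficients of the same sign, which is the assertion.

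The step I expect to be the main obstacle is making the equivalences of the second and third steps rigorous in both directions, namely the \emph{realization} problem. One direction is safe: the exact factorization shows that any genuine pair $x\neq y$ produces a nonzero kernel vector, so a nonvanishing sign-definite determinant already precludes non-injectivity. The reverse direction is delicate, because one must manufacture honest positive vectors $x,y$ from a merely symbolic singularity while simultaneously enforcing $\log x-\log y=\mu$ (a logarithmic relation) and $x-y\in\text{Im}\,N$ (a linear relation); these involve the concentrations through different nonlinearities and must be reconciled. I would treat this either by a continuity and intermediate-value argument along the curve $s\mapsto y\circ e^{s\mu}$ (componentwise), or by invoking a Gale--Nikaido-type global univalence theorem for the bordered map on the positive orthant, and I would have to check that the positive factors $g(F\mu)$ and $w$ never obstruct an otherwise-available kernel direction. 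Controlling this correspondence — together with the quantifier interchange between ``for all rate constants'' and ``for all positive values of the symbolic parameters of $M^{*}$'' — is the technical heart of the proof.
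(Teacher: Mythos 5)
The paper itself does not prove this theorem; it quotes it from Wiuf and Feliu, so your proposal has to be measured against the original proof, whose architecture you have in fact reproduced correctly: interpret $M$ as the symbolic Jacobian $N\operatorname{diag}(K(x))\,F\,\operatorname{diag}(1/x)$, use the exact log-coordinate factorization $K(x)-K(y)=\operatorname{diag}(K(y))\operatorname{diag}(g(F\mu))F\mu$ with $g(t)=(e^t-1)/t>0$, border with the conservation laws, and reduce injectivity to nonvanishing of $\det M^*$ on the positive orthant. The genuine gap is your closing ``elementary positivity fact,'' which is simply false for general polynomials: $z_1^2-z_1z_2+z_2^2$ is homogeneous, has coefficients of both signs, yet is strictly positive on the open positive orthant. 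The direction you need (nonvanishing on the positive orthant $\Rightarrow$ one-signed coefficients) is exactly the direction that fails; making two selected terms cancel does not make the whole polynomial vanish. What rescues the theorem is a structural property your proposal never establishes: $\det M^*$ is \emph{multilinear} (square-free) in $(z,k)$. Expanding the determinant multilinearly in its rows, the $a$-th unreplaced row of $N\operatorname{diag}(z)F\operatorname{diag}(k)$ is $\sum_j N_{aj}z_j\,r_j$, where $r_j$ is the $j$-th row of $F\operatorname{diag}(k)$; any selection that repeats a reaction index $j$ produces two identical rows $r_j$ and hence a vanishing minor, so only monomials square-free in $z$ survive, and each monomial carries one $k$ per chosen column (note that every term of the determinant displayed in the paper's injectivity proposition is indeed square-free of bidegree $(4,4)$). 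For such square-free homogeneous polynomials the dichotomy is true: if coefficients of both signs occur, send the variables of a negative monomial to $+\infty$ and all others to $0^+$ to force negative values (the selected monomial dominates, since all monomials are distinct equal-size subsets of variables), do the same for a positive monomial, and connectedness of the orthant yields a positive zero. Without this multilinearity argument your third step is unsupported, and it is precisely where the content of the criterion lies.

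Two further corrections to your second step. First, the bordered matrix is not ``precisely $M^*$'' as you assemble it: the factorization turns compatibility into $\omega^j\operatorname{diag}(w)\mu=0$ with $w=y\circ g(\mu)$ (not $\omega^j\mu=0$), and turns the balance equations into $N\operatorname{diag}(z')F\mu=0$, with no right factor $\operatorname{diag}(k)$; to land on $M^*$ one must substitute $\nu:=\operatorname{diag}(1/k)\mu$ with $k:=1/w$, which repairs both blocks simultaneously. Second, the ``realization problem'' you single out as the technical heart is not delicate at all once this substitution is made---it is exact and constructive in both directions. Given $M^*(z,k)\nu=0$ with $\nu\neq0$ and $(z,k)$ positive, set $\mu:=\operatorname{diag}(k)\nu$, $y_i:=1/\bigl(k_i\,g(\mu_i)\bigr)$, $x:=y\circ e^{\mu}$, and $\kappa_j:=z_j/\bigl(y^{F_{j,*}}g((F\mu)_j)\bigr)$; then $x\neq y$, $\omega^j(x-y)=\omega^j\operatorname{diag}(1/k)\mu=\omega^j\nu=0$, and $NK(x)-NK(y)=N\operatorname{diag}(z)F\mu=0$, exhibiting non-injectivity. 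No intermediate-value argument is needed, and a Gale--Nikaido route would be circular, since such theorems presuppose the very sign-nonsingularity of the Jacobian family that the determinant condition is meant to characterize.
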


We identify two subsets of negative DAC systems that are injective:

\begin{proposition}
The DAC system is injective, and hence monostationary, if any of the following cases hold:
\begin{enumerate}[(i)]
\item $p_1<0,p_2>0,q_1>0$, and $q_2<0$; or 
\item $p_1>0,p_2<0,q_1<0$, and $q_2>0$.
\end{enumerate}

\noindent For all other cases, the network is not injective.
\end{proposition}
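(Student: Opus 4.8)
The plan is to apply the injectivity criterion of Theorem \ref{theorem:wiuffeliu} directly, so that the whole argument reduces to computing one symbolic determinant and inspecting the signs of its coefficients. First I would record the stoichiometric matrix $N$ (the $5\times 7$ matrix whose columns are the reaction vectors listed in (\ref{eq: CRN})) and reuse the kinetic order matrix $F$ already displayed above. Since the DAC network has rank $4$ on $5$ species, the left kernel of $N$ is one-dimensional ($d=5-4=1$), and it is spanned by the total-carbon conservation vector $\omega^1=[1,1,1,1,1]^\top$, because every reaction vector sums to zero. Consequently $M=N\,\text{diag}(z)\,F\,\text{diag}(k)$ is $5\times 5$, its rows sum to zero column-wise (so $\det M=0$), and $M^*$ is obtained by overwriting one chosen row $i_1$ of $M$ with $\omega^1$.

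Next I would carry out the matrix products to obtain $M$ explicitly. The crucial structural observation is that the kinetic orders $p_1,p_2,q_1,q_2$ enter $M$ only through the two scalars $\alpha:=z_1p_1-z_2p_2$ and $\beta:=z_1q_1-z_2q_2$, which sit in the $A_1$- and $A_2$-columns of the first two rows; every other entry of $M$ is a signed monomial $\pm z_i k_j$. Overwriting the $A_1$-row with $\omega^1$ and expanding $\det M^*$ by cofactors along the overwritten row and the sparse rows coming from the reactions $R_3$ through $R_7$, I expect the result to be affine in $(\alpha,\beta)$, with a clean cancellation of all terms carrying $z_3$ or $z_6$; concretely I anticipate
\begin{equation*}
\det M^* = -\alpha\,k_1\!\left(z_4 z_5 z_7 k_3 k_4 k_5 + z_3 z_5 z_7 k_2 k_4 k_5 + z_4 z_5 z_6 k_2 k_3 k_4 + z_4 z_6 z_7 k_2 k_3 k_5\right) + \beta\, z_4 z_5 z_7 k_2 k_3 k_4 k_5 .
\end{equation*}
This is homogeneous of bidegree $(4,4)$ in $(z,k)$, so the homogeneity hypothesis of Theorem \ref{theorem:wiuffeliu} holds automatically.

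The decisive step is then purely combinatorial. Substituting $\alpha$ and $\beta$ and expanding in the variables $z,k$, I would verify that the ten resulting monomials are pairwise distinct: each of the four $-\alpha k_1(\cdots)$ terms splits into a $z_1 k_1(\cdots)$ monomial with coefficient $-p_1$ and a $z_2 k_1(\cdots)$ monomial with coefficient $+p_2$, while the $\beta$-term contributes a $z_1(\cdots)$ monomial with coefficient $+q_1$ and a $z_2(\cdots)$ monomial with coefficient $-q_2$; since the first eight carry the factor $k_1$ and the last two do not, no collisions occur. Hence the coefficient list of $\det M^*$ is exactly $\{-p_1,\,p_2,\,q_1,\,-q_2\}$ (with multiplicities). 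By Theorem \ref{theorem:wiuffeliu}, the system is injective precisely when these are all strictly positive or all strictly negative: all positive gives $p_1<0,\,p_2>0,\,q_1>0,\,q_2<0$, i.e. case (i), and all negative gives case (ii). In every remaining sign pattern with the four kinetic orders nonzero, the set $\{-p_1,p_2,q_1,-q_2\}$ contains both signs, so $\det M^*$ fails the criterion and the network is not injective.

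The part I expect to be most delicate is the determinant bookkeeping in the middle step, in particular confirming the cancellation of the $z_3$- and $z_6$-monomials, which is what collapses the $A_2$-diagonal entry $(-\beta-z_3-z_6)k_2$ down to a clean $\beta$-coefficient, and checking that the ten monomials really are distinct, since the entire sign analysis hinges on reading the four coefficients $-p_1,p_2,q_1,-q_2$ off independently. I would also remark that the conclusion is independent of which row $i_1$ is overwritten, and that the positivity of the symbolic variables $z_i,k_j$ (reaction rates and reciprocal concentrations) is exactly what makes the sign-of-coefficients test the pertinent one.
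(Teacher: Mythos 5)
Your proposal is correct and follows essentially the same route as the paper's proof: both apply the Wiuf--Feliu criterion (Theorem \ref{theorem:wiuffeliu}) to the symbolic matrix $M^*$ built from $N$, $F$, and the conservation vector $[1,1,1,1,1]^\top$, and your anticipated determinant expands term-by-term into exactly the ten monomials the paper reports, with coefficient set $\{-p_1,\,p_2,\,q_1,\,-q_2\}$ yielding the same two injective sign patterns. The only difference is presentational: the paper obtains the determinant via the Maple script of \citet{FELIU2013}, whereas you derive it by hand through the $\alpha,\beta$ bookkeeping, which if anything makes the argument more self-contained.
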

\begin{proof}
Using the computational approach and Maple script provided by \citet{FELIU2013}, we obtain the determinant of $M^*$ for the DAC system: 

\begin{align*}
det =&-p_1k_1k_2k_3k_4z_1z_4z_5z_6 – p_1k_1k_2k_3k_5z_1z_4z_6z_7 -p_1k_1k_2k_4k_5z_1z_3z_5z_7 \\
&-p_1k_1k_3k_4k_5z_1z_4z_5z_7 +p_2k_1k_2k_3k_4z_2z_4z_5z_6 + p_2k_1k_2k_3k_5z_2z_4z_6z_7 \\
&+p_2k_1k_2k_4k_5z_2z_3z_5z_7 +p_2k_1k_3k_4k_5z_2z_4z_5z_7 + q_1k_2k_3k_4k_5z_1z_4z_5z_7 \\
&-q_2k_2k_3k_4k_5z_2z_4z_5z_7
\end{align*}

\noindent Hence, for $p_1<0,p_2>0,q_1>0$, and $q_2<0$, all the terms are positive, and for $p_1>0,p_2<0,q_1<0$, and $q_2>0$, all the terms are negative. In both cases, the networks are injective by Theorem \ref{theorem:wiuffeliu} and hence, monostationary. In all other cases, the systems are non-injective, which is a necessary condition for multistationarity.
\end{proof}

Finally, for $P$-null and $Q$-null DAC systems, the analysis is based on a specific type of network decomposition known as independent decomposition. A network decomposition is said to be \textbf{independent} if the stoichiometric subspace of the CRN is the direct sum of the stoichiometric subspaces of the subnetworks. \citet{FEIN1987} demonstrated that, in an independent decomposition, the intersection of the set of positive steady states of the subnetworks coincides with the set of positive steady states of the entire network (see  Appendix \ref{appendix:IndependentDecomp}). By examining an independent decomposition of the $P$-null and $Q$-null DAC systems, we observe that:

\begin{proposition}
Any $P$-null or $Q$-null DAC system is monostationary. 
\end{proposition}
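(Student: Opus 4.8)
The plan is to exploit the linkage-class decomposition of the DAC network together with Feinberg's independent-decomposition theorem (\citet{FEIN1987}), which reduces the equilibrium problem for the whole system to that of two much simpler subnetworks. First I would split the CRN in (\ref{eq: CRN}) into its two linkage classes: the reversible pair $\mathcal{N}_1 = \{A_1 + 2A_2 \rightleftarrows 2A_1 + A_2\}$ (reactions $R_1, R_2$) and the monomolecular subnetwork $\mathcal{N}_2 = \{A_2 \rightleftarrows A_3,\ A_4 \to A_2,\ A_2 \to A_5,\ A_5 \to A_4\}$ (reactions $R_3$ through $R_7$). I would then verify that this decomposition is independent by checking $\dim S_1 + \dim S_2 = \dim S$ together with $S_1 \cap S_2 = \{0\}$: here $S_1 = \mathrm{span}\{(1,-1,0,0,0)^\top\}$ is one-dimensional, $S_2$ is three-dimensional and contained in $\{x : x_1 = 0\}$, while the rank of the full network is $4$, so $S = S_1 \oplus S_2$. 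By \citet{FEIN1987} the set of positive steady states of the DAC system then equals the intersection of the positive steady-state sets of $\mathcal{N}_1$ and $\mathcal{N}_2$, and this reduction holds regardless of the kinetic orders.

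Next I would compute the two steady-state sets explicitly. Since $\mathcal{N}_1$ is a single reversible reaction, its positive equilibria are exactly the compositions at which the two power-law rates balance, $k_1 A_1^{p_1} A_2^{q_1} = k_2 A_1^{p_2} A_2^{q_2}$, i.e. $A_1^{\,p_2-p_1} A_2^{\,q_2-q_1} = k_1/k_2$, with $A_3, A_4, A_5$ unconstrained. The subnetwork $\mathcal{N}_2$ carries linear (kinetic-order-one) rates, so its steady-state equations are linear: $\dot A_3 = \dot A_5 = \dot A_4 = 0$ give $A_3 = A_2/\beta$, $A_5 = (k_5/k_6)A_2$, and $A_4 = (k_5/k_4)A_2$, after which the remaining balance $\dot A_2 = 0$ is satisfied automatically, leaving $A_1$ and $A_2$ free. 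Thus the equilibria of $\mathcal{N}_2$ form a two-parameter family in which $A_3, A_4, A_5$ are fixed positive multiples of $A_2$.

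Intersecting the two sets is where the hypothesis enters. For a $P$-null system ($p_2 - p_1 = 0$, $q_2 \neq q_1$) the balance condition for $\mathcal{N}_1$ collapses to $A_2^{\,q_2 - q_1} = k_1/k_2$, pinning $A_2$ to the single positive value $c := (k_1/k_2)^{1/(q_2-q_1)}$; combined with $\mathcal{N}_2$ this fixes $A_3, A_4, A_5$ as well and leaves only $A_1$ free, so the equilibrium locus is the one-parameter curve $\{(A_1, c, c/\beta, (k_5/k_4)c, (k_5/k_6)c) : A_1 > 0\}$. For a $Q$-null system the roles of $A_1$ and $A_2$ swap: $A_1$ is pinned and $A_2$ (hence $A_3, A_4, A_5$) becomes the free parameter. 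To finish I would use the single conservation law: a short computation gives $S^\perp = \mathrm{span}\{(1,1,1,1,1)^\top\}$, so the stoichiometric compatibility classes are level sets of the total carbon $A_1 + A_2 + A_3 + A_4 + A_5$. Restricted to the equilibrium curve this total is an affine, strictly increasing function of the single free parameter (with positive slope since all rate constants and $\beta$ are positive), hence injective; therefore each compatibility class meets the equilibrium locus in at most one point, which is precisely monostationarity.

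The main obstacle I anticipate is not the closing monotonicity argument but the structural bookkeeping that licenses it: one must confirm that the linkage-class decomposition is genuinely independent, so that \citet{FEIN1987} applies, and, in solving $\mathcal{N}_2$, verify that the last balance $\dot A_2 = 0$ is implied by the other three, so that the surviving coordinate ($A_1$ in the $P$-null case, $A_2$ in the $Q$-null case) is truly free rather than over-determined. Once these facts are in place, the reduction to a strictly monotone one-parameter family makes monostationarity immediate.
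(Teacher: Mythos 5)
Your proof is correct, but it takes a genuinely different route from the paper's. The paper decomposes the network into $\{A_1+2A_2 \rightleftarrows 2A_1+A_2,\ A_2 \rightleftarrows A_3\}$ and $\{A_4 \to A_2,\ A_2 \to A_5,\ A_5 \to A_4\}$, then closes by citation: the first subnetwork is the null Anderies system shown monostationary by \citet{FORMEN2023}, the second is a weakly reversible deficiency-zero mass action system handled by the Deficiency Zero Theorem, and monostationarity of the whole network is then asserted from monostationarity of the parts via Theorem \ref{feinberg theorem}. You instead split along linkage classes, $\{R_1,R_2\}$ and $\{R_3,\dots,R_7\}$ (also independent, since $1+3=4$ equals the rank and $S_2 \subseteq \{x : x_1=0\}$), compute both equilibrium sets explicitly, intersect them, and finish with the conservation law $S^\perp = \mathrm{span}\{(1,1,1,1,1)^\top\}$: the total carbon is strictly monotone along the one-parameter equilibrium curve, so each compatibility class meets it at most once. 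Your version is longer but self-contained, and it has a real advantage in rigor at the last step: Theorem \ref{feinberg theorem} as stated only gives $E_+(\mathscr{N},K)=\bigcap_i E_+(\mathscr{N}_i,K_i)$, and passing from "each subnetwork is monostationary" to "the whole network is monostationary" is not immediate, because two equilibria lying in the same stoichiometric class of the full network (difference in $S_1 \oplus S_2$) need not be stoichiometrically compatible within either subnetwork; your monotone-total argument bypasses this entirely. As a bonus, your computation of the intersection reproduces the explicit steady-state parametrization that the paper derives separately in Proposition \ref{prop:steadystates}, including the pinned value $A_2^* = (k_1/k_2)^{1/(q_2-q_1)}$ in the $P$-null case. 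The paper's approach, in exchange, is shorter and shows how the DAC model inherits its behavior from the previously studied Anderies subsystem.
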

\begin{proof}
Consider the (finest) independent decomposition of DAC that contains two subnetworks:

\begin{align*}
    \mathscr{N}_1 &= \{A_1+2A_2 \rightleftarrows 2A_1+A_2, A_2 \rightleftarrows A_3  \}, \\
    \mathscr{N}_2 &= \{ A_4 \rightarrow A_2, A_2 \rightarrow A_5, A_5 \rightarrow A_4 \}
\end{align*}

\noindent The subnetwork $\mathscr{N}_1$ is identical to the power-law kinetic representation of the pre-industrial system of the Anderies system studied by \citet{FORMEN2023}. They showed that the null Anderies system cannot exhibit multiple steady-states or monostationary. The other subnetwork ($\mathscr{N}_2$) is a mass action system that is weakly reversible and has zero deficiency. By the classic Deficiency Zero Theorem for mass action systems \cite{FEIN1972,HORNJACK1972,HORN1972}, $\mathscr{N}_2$ is monostationary. Since both subsystems are monostationary, then the whole system is also monostationary (s. Theorem \ref{feinberg theorem}). 
\end{proof}

\subsection{Absolute concentration robustness}

\textbf{Absolute concentration robustness} or ACR refers to a condition in which the concentration of a species in a network attains the same value in every positive steady-state set by parameters and does not depend on initial conditions \cite{SHFE2010}. This implies that if an important variable, such as $A_2$ (representing atmospheric CO$_2$ concentration), exhibits ACR, its stability is guaranteed even when other variables fluctuate. Notably, conducting an ACR analysis for the DAC system is straightforward by examining the values of $R_p$ and $R_q$ and applying the \textit{Species Hyperplane Criterion} introduced by \citet{LLMM2022} (detailed in Appendix \ref{appendix:PLP}).

\begin{proposition}
A DAC system that is a
\begin{enumerate}[(i)]
    \item positive or negative system has no ACR species; 
    \item $P$-null system has ACR species consisting precisely of $A_2$, $A_3$, $A_4$ and $A_5$; and
    \item $Q$-null system has ACR in $A_1$.
\end{enumerate}    
\end{proposition}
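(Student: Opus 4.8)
The plan is to reduce the ACR question to a purely linear-algebraic condition on the one-dimensional space $(\widetilde{S})^\perp$ already computed, via the Species Hyperplane Criterion of \citet{LLMM2022}. Since the DAC system possesses a positive steady state, the set of positive equilibria is the log-affine set $\{x>0 : \log x - \log x^* \in (\widetilde{S})^\perp\}$ for any fixed equilibrium $x^*$. Hence species $A_i$ attains the same value at every positive steady state precisely when the $i$th coordinate of $\log x - \log x^*$ is pinned to zero along $(\widetilde{S})^\perp$; because $(\widetilde{S})^\perp$ is spanned by a single vector $w$, this says $A_i$ has ACR if and only if $w_i = 0$ (equivalently $e_i \in \widetilde{S}$). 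The Species Hyperplane Criterion packages exactly this observation, so the proof reduces to reading off the zero entries of the spanning vector of $(\widetilde{S})^\perp$ in each of the four system classes.

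First I would record which representation of $(\widetilde{S})^\perp$ is legitimate in each class. The form $w = [-1, R_p, R_p, R_p, R_p]^\top$ requires $q_2 \neq q_1$, while $w = [-R_q, 1, 1, 1, 1]^\top$ requires $p_2 \neq p_1$; selecting the valid representation per class is the one genuinely delicate point, since the offending ratio is undefined exactly in the null case it is meant to describe. For case (i), a positive or negative system has both $p_2 \neq p_1$ and $q_2 \neq q_1$, so $R_p$ is a nonzero finite number and $w = [-1, R_p, R_p, R_p, R_p]^\top$ has every entry nonzero; no coordinate is pinned and there is no ACR species. For a $P$-null system ($p_2 - p_1 = 0$, $q_2 \neq q_1$) the first representation is valid and gives $R_p = 0$, so $w = [-1, 0, 0, 0, 0]^\top$; only the $A_1$-coordinate is free and $A_2, A_3, A_4, A_5$ are all pinned, yielding ACR exactly in those four species. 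For a $Q$-null system ($q_2 - q_1 = 0$, $p_2 \neq p_1$) the first representation is unavailable, so I switch to the second, obtaining $R_q = 0$ and $w = [0, 1, 1, 1, 1]^\top$; here only the $A_1$-coordinate is pinned, so $A_1$ is the unique ACR species. This reproduces (ii) and (iii).

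The main obstacle is not computational but definitional: I must state the Species Hyperplane Criterion precisely enough that ``the $i$th coordinate of the spanning vector vanishes'' is literally equivalent to ACR in $A_i$, and I must justify the switch between the $R_p$- and $R_q$-representations so that the argument never silently divides by zero in the null cases. Once these two points are settled, all four cases follow by inspection of the single spanning vector of $(\widetilde{S})^\perp$, with no further estimation required.
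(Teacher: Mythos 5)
Your proposal is correct and follows essentially the same route as the paper's own proof: apply the Species Hyperplane Criterion of \citet{LLMM2022} to the already-computed one-dimensional $(\widetilde{S})^\perp$ and read off which coordinates of the spanning vector vanish in each class. The only difference is that you are more explicit than the paper about which representation ($R_p$ versus $R_q$) is valid in each null case, which is a welcome clarification but not a different argument.
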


\begin{proof}
The \textit{Species Hyperplane Criterion} (stated in Appendix \ref{appendix:PLP}) guarantees that a system has ACR species if and only if the vector coordinates corresponding to these species are zero for all basis vectors in space $(\widetilde{S})^\perp$.  As noted earlier, for the DAC system, $(\widetilde{S})^\perp = \text{span } 
\left\lbrace
\begin{bmatrix}
-1 & R_p & R_p & R_p & R_p
\end{bmatrix}^\top
\right\rbrace$  or $(\widetilde{S})^\perp =
\text{span } 
\left\lbrace
\begin{bmatrix}
-R_q & 1 & 1 & 1 & 1
\end{bmatrix}^\top
\right\rbrace
$.
Hence, a DAC system with positive or negative $R_p$ or $R_q$ has no ACR species. The DAC system with $R_p=0$ has ACR species consisting precisely of $A_2$, $A_3$, $A_4$ and $A_5$. If $R_q=0$, the system has ACR in $A_1$.
\end{proof}

This means that if we desire that $A_2$ or the CO$_2$ concentration in the atmosphere be stable irrespective of the initial conditions, we would like $R_p$ to be equal to zero. To achieve this, $p_1$ (the kinetic order of land photosynthesis interaction) must be equal to $p_2$ (the kinetic order of land respiration interaction) but $q_1$ (the kinetic order of atmosphere photosynthesis interaction) must not be equal to $q_2$ (the kinetic order of atmosphere respiration interaction).

\subsection{Conditions for atmospheric carbon reduction}

We provide different conditions under which a set of initial conditions, along with all positive steady states within the corresponding stoichiometric classes, leads to a genuine reduction in the atmospheric carbon pool. That is, for a set of initial conditions $\mathcal{A}^0=\{A_1^0, A_2^0, A_3^0, A_4^0, A_5^0\}$ and steady state $\mathcal{A}^*=\{A_1^*, A_2^*, A_3^*, A_4^*, A_5^*\}$ in the corresponding stoichiometric class, we have $A_2^*<A_2^0$.

The approach taken here is to use total amounts or conserved quantities in a kinetic system. Generally, for any element $w \in S^\perp$ and $x \in \Omega$, 
$$
w f(x) = w \frac{dx}{dt} = 0, 
$$
since $\text{Im}\, f \subseteq S$. If $w = (w_1, \dots, w_m)$ and $x = (x_1, \dots, x_m)$, then 
$$
0 = \sum w_i \frac{dx_i}{dt} = \frac{d}{dt} \left( \sum w_i x_i \right), 
$$
which implies that $T := \sum w_i x_i$ is a constant—this is called a \textbf{conserved quantity}. Clearly, any two elements in a stoichiometric compatibility class have the same conserved quantity or total amount.

The DAC system is a conservative CRN (in the sense of Definition \ref{def:conservative} in Appendix \ref{appendix: CRNT}) since $(1, 1, 1, 1, 1)$ is a basis for $S^\perp$. This implies that for a set of initial conditions $\mathcal{A}^0$ and steady state $\mathcal{A}^*$ in the corresponding stoichiometric class,
$$
A^0_1 + \dots + A^0_5 = A^*_1 + \dots + A^*_5.
$$

\subsubsection{A necessary condition for $A_2$ reduction in positive/negative DAC systems}

Observe that the set of positive equilibria of the positive or negative CDAC system can be described as follows:
\begin{proposition}\label{prop:steadystates}
The set of positive equilibria of the positive or negative CDAC system can be parametrized by $A_2$ as follows

$$
 A_1=\left(\dfrac{k_2}{k_1} A_2^{q_2 - q_1} \right)^\frac{1}{p_1-p_2}, \quad p_1 \neq p_2 
$$

$$
    A_2 = A_2, \quad\quad
    A_3 = \frac{1}{\beta} A_2, \quad\quad
    A_4 = \frac{k_5}{k_4} A_2 \quad\quad
    A_5 = \frac{k_5}{k_6} A_2.
$$
\end{proposition}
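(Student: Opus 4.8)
The plan is to solve the steady-state system $\dot{A}_i = 0$ directly from (\ref{eq:DAC}), exploiting the fact that four of the five equations decouple cleanly and the fifth turns out to be redundant. First I would set $\dot{A}_3 = \dot{A}_4 = \dot{A}_5 = 0$, all of which are linear. The equation $\dot{A}_3 = 0$ reads $a_m A_2 = a_m\beta A_3$, giving $A_3 = \tfrac{1}{\beta}A_2$; the equation $\dot{A}_5 = 0$ gives $A_5 = \tfrac{k_5}{k_6}A_2$; and $\dot{A}_4 = 0$ gives $A_4 = \tfrac{k_6}{k_4}A_5 = \tfrac{k_5}{k_4}A_2$ after substituting the expression for $A_5$. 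This immediately yields three of the four claimed identities, and each expression is positive whenever $A_2 > 0$ (using positivity of the rate constants and of $\beta$).

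Next I would treat $\dot{A}_1 = 0$, i.e. $k_1 A_1^{p_1} A_2^{q_1} = k_2 A_1^{p_2} A_2^{q_2}$. Dividing both sides by $k_2 A_1^{p_1} A_2^{q_1}$ (legitimate since all quantities are positive) isolates the powers of $A_1$ as $A_1^{p_2 - p_1} = \tfrac{k_1}{k_2}A_2^{q_1 - q_2}$, and raising to the power $\tfrac{1}{p_2 - p_1}$ solves for $A_1$. Absorbing the sign of the exponent then rewrites this as $A_1 = \big(\tfrac{k_2}{k_1}A_2^{q_2 - q_1}\big)^{1/(p_1 - p_2)}$, the stated formula. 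The point to flag here is that this step requires $p_1 \neq p_2$, which is exactly what positive and negative DAC systems guarantee: $R_p = (p_2-p_1)/(q_2-q_1)$ is a nonzero finite number precisely when both $p_2 \neq p_1$ and $q_2 \neq q_1$, the degenerate cases being the $P$-null and $Q$-null systems excluded from this proposition.

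Finally I would show that $\dot{A}_2 = 0$ imposes no further constraint, which is really the only subtle point: a priori one might worry the five steady-state equations over-determine the four-dimensional solution set. The clean resolution is conservation. Since $(1,1,1,1,1)\in S^\perp$ (the DAC system is conservative, as noted in the excerpt), the five functions $\dot{A}_i$ sum identically to zero, so $\dot{A}_2 = -(\dot{A}_1 + \dot{A}_3 + \dot{A}_4 + \dot{A}_5)$ vanishes automatically once the other four do. Equivalently, one can substitute the derived relations into $\dot{A}_2$ and verify term-by-term cancellation: the first two terms cancel by $\dot{A}_1 = 0$, the pair $-a_m A_2 + a_m\beta A_3$ cancels by $\dot{A}_3 = 0$, and $k_4 A_4 - k_5 A_2$ cancels because $A_4 = \tfrac{k_5}{k_4}A_2$. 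Either way, $A_2$ remains a free positive parameter, and since every positive value of $A_2$ produces exactly one vector of the stated form with all components positive, the displayed equations parametrize the entire set of positive equilibria. The main obstacle is thus not computational but conceptual—recognizing and justifying the redundancy of the $\dot{A}_2$ equation so that the count of free parameters comes out right.
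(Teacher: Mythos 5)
Your proposal is correct and follows essentially the same route as the paper's proof: set the ODEs in (\ref{eq:DAC}) to zero, solve the linear equations for $A_3$, $A_4$, $A_5$ and the power-law equation for $A_1$ in terms of the free parameter $A_2$, using $p_1 \neq p_2$ exactly where the positive/negative classification guarantees it. The paper's own proof is only a sketch (it works out just the $A_1$ equation), so your extra step verifying that $\dot{A}_2 = 0$ is automatically satisfied---via the conservation relation $(1,1,1,1,1) \in S^\perp$, so that $\dot{A}_2 = -(\dot{A}_1 + \dot{A}_3 + \dot{A}_4 + \dot{A}_5)$---is a completion of the same argument rather than a different approach.
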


\begin{proof}
Set the equations in (\ref{eq:DAC}) to 0 and solve for $A_2$. For instance, setting the first equation in the system to 0, we get
$$
A_1^{p_1 - p_2} = \dfrac{k_2}{k_1} A_2^{q_2 - q_1}
$$
If $p_1 \neq p_2$, 
$A_1=\left(\dfrac{k_2}{k_1} A_2^{q_2 - q_1} \right)^\frac{1}{p_1-p_2}$.
\end{proof}

\begin{remark}
    If we let $P:=p_2 - p_1$,  the equilibrium value of $A_1$ can be written as 
    $$
    A_1 = \left( \dfrac{k_1}{k_2} \right)^{\sfrac{1}{P}} A_2^{-R_q}
    $$
\end{remark}

\noindent Denote
\begin{align*}
    \mathsf{SUM}^0 &:= A_1^0 +A_3^0 + A_4^0 +A_5^0 ; \text{ and} \\
    \mathsf{SUM}^* &:= A_1^* +A_3^* + A_4^* +A_5^*. \\
\end{align*}
A set of initial conditions $\mathcal{A}^0$ determines a unique stoichiometric class $\mathcal{S}^0$. Suppose there is another point in $\mathcal{S}^0$ with $A_2=\lambda A_2^0$, where $0<\lambda<1$. Moreover, suppose there is a positive or negative DAC system with an equilibrium in $\mathcal{S}^0$ whose $A_2$-value is $\lambda A_2^0$. For all positive equilibria in $\mathcal{S}^0$, we have
$$
\mathsf{SUM}^0 + A_2^0 = \mathsf{SUM}^* + A_2^* .
$$

\noindent From the set of positive equilibria of positive and negative CDAC in Proposition \ref{prop:steadystates}, we have 

\begin{equation}\label{eq:necessary}
\mathsf{SUM}^0 + A_2^0 = \left( \dfrac{k_1}{k_2} \right)^{\sfrac{1}{P}} \left(\lambda A_2^0 \right) ^{-R_q} + \left(\frac{1}{\beta} + \frac{k_5}{k_4} + \frac{k_5}{k_6} + 1\right) \lambda A_2^0 
\end{equation}

\noindent From the last equation, we state the following necessary condition for the reduction of $A_2$ from its initial value to its steady-state value:

\begin{proposition}
All the values of $P$, $R_q$, $\beta$, and the rate constants satisfying Equation (\ref{eq:necessary}) enable atmospheric carbon reduction from the initial $A_2^0$ to the steady-state value $\lambda A_2^0$. 
\end{proposition}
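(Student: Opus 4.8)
The plan is to read Equation (\ref{eq:necessary}) as nothing more than the single conservation law of the DAC system, evaluated at a candidate equilibrium whose $A_2$-coordinate has been prescribed to be $\lambda A_2^0$. The key structural fact, already established above, is that the DAC system is conservative with $S^\perp = \text{span}\{(1,1,1,1,1)\}$. Since $S^\perp$ is one-dimensional, the total carbon $T = A_1 + A_2 + A_3 + A_4 + A_5$ is (up to scalar) the only conserved quantity, and two positive species vectors are stoichiometrically compatible if and only if they share the same value of $T$. Thus membership of a candidate equilibrium in the stoichiometric class $\mathcal{S}^0$ determined by $\mathcal{A}^0$ is equivalent to the single scalar equation $\mathsf{SUM}^* + A_2^* = \mathsf{SUM}^0 + A_2^0$.

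First I would fix $0 < \lambda < 1$ and invoke Proposition \ref{prop:steadystates} to build the positive equilibrium of the given positive or negative DAC system whose atmospheric coordinate equals $A_2^* = \lambda A_2^0$. Using the parametrization together with the remark rewriting $A_1$ as $(k_1/k_2)^{1/P}(A_2^*)^{-R_q}$, the remaining coordinates are $A_3^* = \tfrac{1}{\beta}\lambda A_2^0$, $A_4^* = \tfrac{k_5}{k_4}\lambda A_2^0$, and $A_5^* = \tfrac{k_5}{k_6}\lambda A_2^0$. I would note that each of these is strictly positive whenever the rate constants and $\beta$ are positive, so the candidate is a genuine positive steady state rather than a merely formal solution.

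Next I would substitute these coordinates into the total $\mathsf{SUM}^* + A_2^*$ and collect terms; this reproduces precisely the right-hand side of Equation (\ref{eq:necessary}). Hence the hypothesis that $P$, $R_q$, $\beta$, and the rate constants satisfy (\ref{eq:necessary}) is exactly the assertion that the constructed equilibrium carries the same total carbon as $\mathcal{A}^0$, i.e. that it lies in $\mathcal{S}^0$. Because $0 < \lambda < 1$ forces $A_2^* = \lambda A_2^0 < A_2^0$, the equilibrium realized inside the stoichiometric class of the prescribed initial condition has strictly lower atmospheric carbon, which is the desired reduction.

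The computation itself is routine bookkeeping; the step I expect to carry the real content is the equivalence between stoichiometric compatibility and equality of the single conserved total. I would make this explicit by appealing to the one-dimensionality of $S^\perp$, so that (\ref{eq:necessary}) is read not only as a necessary consequence of reduction but as a \emph{sufficient} condition guaranteeing that an admissible, positive, reduced-$A_2$ equilibrium actually sits in $\mathcal{S}^0$. A secondary check is confirming positivity of the constructed equilibrium, which is immediate from the positivity of the rate constants and $\beta$.
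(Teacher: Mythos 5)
Your proposal is correct, and it uses the same two ingredients as the paper --- the conservation law coming from $S^\perp = \operatorname{span}\{(1,1,1,1,1)\}$ and the equilibrium parametrization of Proposition \ref{prop:steadystates} --- but it runs the argument in the opposite, and logically stronger, direction. The paper's own derivation presupposes the existence of a positive equilibrium in $\mathcal{S}^0$ with $A_2^* = \lambda A_2^0$ and then deduces Equation (\ref{eq:necessary}) from the conservation relation $\mathsf{SUM}^0 + A_2^0 = \mathsf{SUM}^* + A_2^*$; that is, it establishes (\ref{eq:necessary}) only as a \emph{necessary} condition, and the proposition's ``enable'' claim is then asserted rather than proved. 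You instead prove the converse: given parameters satisfying (\ref{eq:necessary}), you construct the candidate equilibrium from the parametrization with $A_2^* = \lambda A_2^0$, check its positivity, and then use the fact that stoichiometric compatibility is \emph{equivalent} to equality of the total carbon (because $S^\perp$ is one-dimensional) to conclude that this equilibrium actually lies in $\mathcal{S}^0$, whence $A_2^* = \lambda A_2^0 < A_2^0$. This equivalence step is exactly what the paper leaves implicit, and it is what licenses reading (\ref{eq:necessary}) as a sufficient condition; your version is therefore the more complete proof of the proposition as literally stated, at the cost of a few extra lines, while the paper's derivation is shorter but only supports the ``necessary condition'' reading announced in its section heading.
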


\subsubsection{Sufficient conditions for atmosphere carbon reduction}


Since the DAC system is conservative, each stoichiometric class is compact (s. Appendix 1 of \citet{HORNJACK1972}). Hence, the continuous maps $\mathsf{pr}_i: \mathbb{R}^\mathscr{S}_{>0} \to \mathbb{R}_>$ and their sums attain maxima and minima on any stoichiometric class.

\begin{proposition}
    Consider the $P$-null DAC system with set of initial conditions $\mathcal{A}^0$ and positive steady state $\mathcal{A}^*$ in the corresponding $\mathcal{S}^0$ (viewed as a compact subset of $\mathbb{R}^\mathscr{S}_{>0}$). Then if
    $$
    \left(\dfrac{k_1}{k_2}\right)^{\frac{1}{q_2-q_1}} <T-M'', 
    $$
    where $T$ is the conserved quantity and $M''$ is the maximum of $\mathsf{pr}_1+\mathsf{pr}_3+\mathsf{pr}_4+\mathsf{pr}_5$ on $\mathcal{S}^0$, then $  A_2^*<A_2^0$. 
\end{proposition}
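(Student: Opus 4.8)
The plan is to exploit the fact that in a $P$-null system ($p_2 - p_1 = 0$ with $q_2 - q_1 \neq 0$) the steady-state value of $A_2$ is completely pinned down, and then to combine this with the conservation law to bound the initial value $A_2^0$ from below. First I would compute $A_2^*$ directly. Setting $\dot{A_1}=0$ in (\ref{eq:DAC}) gives $A_1^{p_1-p_2} = \frac{k_2}{k_1} A_2^{q_2-q_1}$; in the $P$-null case the exponent $p_1 - p_2$ vanishes, collapsing the left-hand side to $1$, so that $A_2^* = \left(\frac{k_1}{k_2}\right)^{1/(q_2-q_1)}$. This is precisely the ACR value for $A_2$ identified in the absolute concentration robustness analysis, which confirms that $A_2^*$ does not depend on the particular stoichiometric class chosen.

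Next I would invoke conservation. Because $(1,1,1,1,1)$ spans $S^\perp$, the total amount $T = A_1 + A_2 + A_3 + A_4 + A_5$ is constant on $\mathcal{S}^0$; in particular $T = A_2^0 + \left(A_1^0 + A_3^0 + A_4^0 + A_5^0\right)$. Since $\mathcal{A}^0 \in \mathcal{S}^0$ and $M''$ is by definition the maximum of $\mathsf{pr}_1 + \mathsf{pr}_3 + \mathsf{pr}_4 + \mathsf{pr}_5$ over the compact set $\mathcal{S}^0$, we have $A_1^0 + A_3^0 + A_4^0 + A_5^0 \leq M''$, whence $A_2^0 = T - \left(A_1^0 + A_3^0 + A_4^0 + A_5^0\right) \geq T - M''$.

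Finally I would chain the two facts: the hypothesis reads $A_2^* = \left(\frac{k_1}{k_2}\right)^{1/(q_2-q_1)} < T - M''$, while the previous step gives $T - M'' \leq A_2^0$, so that $A_2^* < A_2^0$, as claimed. The argument is essentially bookkeeping once the two ingredients are in place, so I would not expect a genuine obstacle; the only conceptual points worth flagging are that $P$-nullity is exactly what forces $A_2^*$ down to the fixed ACR value (so the left side of the hypothesis is a single number rather than a class-dependent quantity), and that the conservativity of the DAC system, which guarantees each $\mathcal{S}^0$ is compact, is what makes $M''$ well-defined and thereby available as the lower bound on $A_2^0$.
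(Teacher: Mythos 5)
Your proof is correct and follows essentially the same route as the paper's: pin down $A_2^* = (k_1/k_2)^{1/(q_2-q_1)}$ from the first ODE using $p_1=p_2$, then use the conservation law and the definition of $M''$ to get $T - M'' \leq A_2^0$, and chain the two inequalities. The paper's own proof is just a terser version of exactly this argument.
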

\begin{proof}
From the first equation of the ODE system in (\ref{eq:DAC}), we observe that for a $P$-null DAC system (i.e., $p_1=p_2$),  $A_2^*=\left(\dfrac{k_1}{k_2}\right)^{\frac{1}{q_2-q_1}}$. Moreover, $$T-M'' \leq T- \left( A_1^0 + A_3^0 + A_4^0 + A_5 ^0\right)=A_2^0.$$
\end{proof}


\begin{proposition}
    Consider the positive or negative DAC system with set of initial conditions $\mathcal{A}^0$ and positive steady state $\mathcal{A}^*$ in the corresponding $\mathcal{S}^0$. Let $m'$ be the minimum of the continuous map $\mathsf{pr}_2$ on $\mathcal{S}^0$. Then if
    $$
    1+\dfrac{M''}{m'}<\left(\dfrac{k_2}{k_1}\right)^\frac{1}{p_1-p_2}(m')^\frac{q_2-q_1}{p_1-p_2} + \frac{1}{\beta} + \frac{k_5}{k_4} + \frac{k_5}{k_6}, 
    $$
   where $M''$ is the maximum of $\mathsf{pr}_1+\mathsf{pr}_3+\mathsf{pr}_4+\mathsf{pr}_5$ on $\mathcal{S}^0$, then $ A_2^*<A_2^0$. 
\end{proposition}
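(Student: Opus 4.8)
The plan is to reuse the two ingredients that drove the preceding propositions: the single conservation law of the conservative DAC system and the explicit steady-state parametrization of Proposition~\ref{prop:steadystates}. For brevity write $\gamma:=\tfrac1\beta+\tfrac{k_5}{k_4}+\tfrac{k_5}{k_6}$, $c:=\bigl(\tfrac{k_2}{k_1}\bigr)^{1/(p_1-p_2)}$ and $e:=\tfrac{q_2-q_1}{p_1-p_2}$, so that Proposition~\ref{prop:steadystates} reads $A_1^*=c\,(A_2^*)^{e}$ and $A_3^*+A_4^*+A_5^*=\gamma A_2^*$. Since $(1,1,1,1,1)$ spans $S^\perp$, the total $T=A_1+A_2+A_3+A_4+A_5$ is conserved on $\mathcal S^0$, so it agrees at $\mathcal A^0$ and $\mathcal A^*$; substituting the parametrization collapses the equilibrium condition to the single scalar identity
\[
T=c\,(A_2^*)^{e}+(1+\gamma)\,A_2^*.
\]

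Writing $S^0:=A_1^0+A_3^0+A_4^0+A_5^0=T-A_2^0$, the conservation law then yields the exact equivalence
\[
A_2^*<A_2^0\quad\Longleftrightarrow\quad c\,(A_2^*)^{e}+\gamma\,A_2^*>S^0,
\]
which is the conceptual core of the argument: atmospheric carbon drops precisely when the equilibrium mass held in the non-$A_2$ pools exceeds their initial total. I would aim to establish the right-hand inequality under the stated hypothesis.

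Next I would discharge $S^0$ and $A_2^*$ using compactness. Each stoichiometric class is compact, so $m'=\min_{\mathcal S^0}\mathsf{pr}_2$ and $M''=\max_{\mathcal S^0}(\mathsf{pr}_1+\mathsf{pr}_3+\mathsf{pr}_4+\mathsf{pr}_5)$ are attained; since $\mathcal A^0,\mathcal A^*\in\mathcal S^0$ we get $S^0\le M''$ and $A_2^*\ge m'$. Because $\mathsf{pr}_1+\mathsf{pr}_3+\mathsf{pr}_4+\mathsf{pr}_5=T-\mathsf{pr}_2$ pointwise, in fact $M''=T-m'$, so the left side of the claimed threshold is simply $1+\tfrac{M''}{m'}=\tfrac{T}{m'}$. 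It therefore suffices to show $c\,(A_2^*)^{e}+\gamma A_2^*>M''$; bounding the left side below by its value at $A_2^*=m'$ reduces matters to $c\,(m')^{e}+\gamma m'>M''$, and dividing by $m'$ (and rewriting the left side via $M''=T-m'$) presents the hypothesis in the form of the displayed inequality $1+\tfrac{M''}{m'}<\bigl(\tfrac{k_2}{k_1}\bigr)^{1/(p_1-p_2)}(m')^{(q_2-q_1)/(p_1-p_2)}+\gamma$.

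The delicate step — and the main obstacle — is the lower bound $c\,(A_2^*)^{e}+\gamma A_2^*\ge c\,(m')^{e}+\gamma m'$ inferred from $A_2^*\ge m'$. For a \emph{negative} DAC system, $e=-R_q>0$ and the map $a\mapsto c\,a^{e}+\gamma a$ is strictly increasing on $(0,\infty)$, so the bound is automatic. For a \emph{positive} DAC system $e<0$: the power-law term is decreasing, $a\mapsto c\,a^{e}+\gamma a$ is convex with an interior minimiser, and this non-monotonicity is exactly the feature that allows the positive system to be multistationary (the identity $T=c\,a^{e}+(1+\gamma)a$ may have two equilibrium roots). There the value at $a=m'$ dominates only on the increasing branch, so one must either argue that the relevant equilibrium lies at or beyond the minimiser or bound the expression at the minimiser itself. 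Making this branch analysis uniform across both classes, so that the single stated inequality suffices, is where the real work concentrates.
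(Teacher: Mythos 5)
Your skeleton---the conservation law, the parametrization of Proposition \ref{prop:steadystates}, and the compactness bounds $S^0\le M''$, $A_2^*\ge m'$---is the same as the paper's, but the proposal does not close, and one of its two gaps is an unflagged algebra error. Keeping your notation $c=\left(\tfrac{k_2}{k_1}\right)^{1/(p_1-p_2)}$, $e=\tfrac{q_2-q_1}{p_1-p_2}$, $\gamma=\tfrac1\beta+\tfrac{k_5}{k_4}+\tfrac{k_5}{k_6}$: after your monotonicity step you need $c\,(m')^{e}+\gamma m'>M''$, and since $M''=T-m'$, dividing by $m'$ turns this into
\[
1+\frac{M''}{m'}<c\,(m')^{e-1}+\gamma+1,
\]
with exponent $e-1$ and an extra $+1$. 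This is \emph{not} the proposition's hypothesis $1+\tfrac{M''}{m'}<c\,(m')^{e}+\gamma$; the hypothesis guarantees the inequality you need only under the additional condition $c\,(m')^{e-1}(m'-1)\le 1$ (e.g., when $m'\le 1$), which you nowhere assume. So even in the negative case $e>0$, where your monotonicity bound is valid, your chain does not connect the stated hypothesis to the conclusion, and the claim that dividing by $m'$ ``presents the hypothesis in the form of the displayed inequality'' is simply false.

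The second gap is the one you flag yourself: for a positive DAC system ($e<0$) the map $a\mapsto c\,a^{e}+\gamma a$ is not monotone, and you leave that case entirely open; announcing that this ``is where the real work concentrates'' is not a proof. It is worth seeing how the paper's proof is structured so as never to need monotonicity in $A_2^*$: it divides the conserved total by $A_2^*$, writes $\mathsf{SUM}^+:=T/A_2^*=c\,(A_2^*)^{e-1}+1+\gamma$, obtains the equivalence $A_2^*<A_2^0\iff 1+S^0/A_2^*<\mathsf{SUM}^+$, and then chains
\[
1+\frac{S^0}{A_2^*}\;\le\;1+\frac{M''}{m'}\;<\;c\,(m')^{e}+\gamma\;<\;\mathsf{SUM}^+,
\]
so the hypothesis sits sandwiched between a bound evaluated at $\mathcal{A}^0$ and one still involving $A_2^*$; the steady-state value is never replaced by $m'$ inside the power-law term. (The price is that the last comparison, $c\,(m')^{e}<c\,(A_2^*)^{e-1}+1$, is asserted in the paper without justification.) Your route, which substitutes $m'$ for $A_2^*$ on the steady-state side, is exactly what forces the branch analysis you describe and cannot avoid it. As written, therefore, the proposal establishes the proposition in neither the positive nor the negative case.
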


\begin{proof}
Denote $A_1^*+ A_2^*+ A_3^* + A_4^* + A_5^*$ by $=A_2^* \left( \mathsf{SUM}^+ \right)$,
where in $\mathsf{SUM}^+$, the summands are given in the parametrization of the equilibria set of positive/negative DAC system (Prop. \ref{prop:steadystates}). Now,
\begin{align*}
    1 < \dfrac{A^0_2}{A_2^*} & \Longleftrightarrow  1+ \dfrac{(\mathsf{pr}_1+\mathsf{pr}_3+\mathsf{pr}_4+\mathsf{pr}_5)(\mathcal{A}^0)}{A_2^*} \\
     & < \dfrac{A^0_2}{A_2^*} + \dfrac{(\mathsf{pr}_1+\mathsf{pr}_3+\mathsf{pr}_4+\mathsf{pr}_5)(\mathcal{A}^0)}{A_2^*}=\mathsf{SUM}^+ .
\end{align*}
\noindent We have 
$$
1+ \dfrac{(\mathsf{pr}_1+\mathsf{pr}_3+\mathsf{pr}_4+\mathsf{pr}_5)(\mathcal{A}^0)}{A_2^*}  \leq 1+ \dfrac{M''}{m'}
$$
\noindent by the definition of numerator and denominator. Furthermore, $$ \left(\dfrac{k_2}{k_1}\right)^\frac{1}{p_1-p_2}(m')^\frac{q_2-q_1}{p_1-p_2} + \frac{1}{\beta} + \frac{k_5}{k_4} + \frac{k_5}{k_6}<\mathsf{SUM}^+.$$ Hence, the RHS of the first equivalence above is fulfilled and $A_2^*<A_2^0$.
\end{proof}

\section{Conclusion, Summary, and Outlook}

We analyzed a global carbon cycle system that incorporates direct air capture technology by utilizing the tools and insights in Chemical Reaction Network Theory. The innovative aspect of this approach lies in its ability to promptly offer crucial insights into the system's long-term dynamics by focusing on the network's topological structure and kinetics, eliminating the necessity to specify system parameters.

By examining a dynamically equivalent reaction network of a global carbon cycle system with DAC technology, this study efficiently identified three crucial dynamic features: the existence of positive steady states, the possibility of multiple steady states, and the absolutely robust concentration levels of carbon pools. Irrespective of kinetic orders and rate constants, the DAC system is expected to exhibit a positive steady state. Additionally, assessments concerning the system's multistationarity and ACR traits are based on the sign of ratios $R_p$ and $R_q$. Table \ref{table:summary} outlines the results discussed earlier, connecting the signs of $R_p$ and $R_q$ to the dynamic characteristics of the associated DAC system.

\begin{table}
\begin{center}
\begin{minipage}{\textwidth}
\begin{tabular}{|c|l|} 
\hline
\textbf{Property} & \textbf{DAC system}  \\ 
\hline
\multirow{1}{*}{Existence of at least one steady state}	&	True for all systems	\\
\hline
\multirow{4}{*}{Capacity for multiple steady states}	&	$R_p = 0$: only one steady state	\\
\hhline{~-}   &  $R_q = 0$: only one steady state  \\
\hhline{~-}  &  $R_p$ or $R_q > 0$: all parameter combinations	 \\
  &  result in more than one steady state 	 \\
\hhline{~-}  &  $R_p$ or $R_q < 0$: some parameter combinations  	 \\
 &  may result in more than one steady state  	 \\
\hline
\multirow{4}{*}{ACR}	&	$R_p = 0$: ACR in $A_2, A_3, A_4, A_5$	\\
\hhline{~-}   &  $R_q = 0$: ACR in $A_1$ \\
\hhline{~-}  &  $R_p$ or $R_q > 0$: no ACR in any species 	 \\
\hhline{~-}  &  $R_p$ or $R_q < 0$: no ACR in any species 	 \\
\hline
\end{tabular}
\end{minipage}
\end{center}
\caption{Summary of the dynamic properties of the DAC system. } \label{table:summary}
\end{table} 

It can be seen from Table \ref{table:summary}, that the desirable outcome of a unique and stable concentration of carbon dioxide in the atmosphere may be realized if $R_p = 0$. No tipping points from the existence of multistationarity would be expected. 

The approach utilized in this research could prove valuable for rapidly evaluating other negative emission technologies (NETs). It can efficiently determine if the system fails to meet specified crucial criteria (such as the absence of a positive steady state in the long run or bistability), prompting a reassessment of the technology's deployment.

Although the system examined is currently limited in scope, there is potential to refine and transfer the suggested framework to more complex carbon cycle models. When dealing with a broader or CRN representation of a carbon cycle, incorporating network decomposition theory in CRNT to dissect the system into smaller elements could be a promising approach.

The idea of ``planetary boundaries," highlighted by \citet{AND2013}, has had a profound influence on the global sustainability community, as demonstrated in the research conducted by \citet{Steffen2015}. Our ongoing research efforts focus on developing kinetic representations for various CDR methods such as bioenergy with carbon capture and storage and ocean fertilization. \citet{TAN2022} have stressed the significance of optimizing combinations or ``portfolios" of NETs. To address this challenge, we aim to investigate other combinations of NETs to determine if these may exhibit steady-state multiplicity.



\bibliography{rncdr}

\appendix

\section{Fundamentals of reaction networks and kinetic systems }\label{appendix: CRNT}

As supplementary material, we provide a formal presentation of the relevant concepts and results related to chemical reaction networks and chemical kinetic systems.

\subsubsection*{Notation}
We denote the real numbers by $\mathbb{R}$, the non-negative real numbers by $\mathbb{R}_{\geq0}$ and the positive real numbers by $\mathbb{R}_{>0}$.  Objects in reaction systems are viewed as members of vector spaces. Suppose $\mathscr{I}$ is a finite index set. By $\mathbb{R}^\mathscr{I}$, we mean the usual vector space of real-valued functions with domain $\mathscr{I}$.  If $x \in \mathbb{R}_{>0}^\mathscr{I}$ and $y \in \mathbb{R}^\mathscr{I}$, we define $x^y \in \mathbb{R}_{>0}$ by
$
x^y= \prod_{i \in \mathscr{I}} x_i^{y_i} .
$
Let $x \wedge y$ be the component-wise minimum, $(x \wedge y)_i = \min (x_i, y_i)$.
The vector $\log x\in \mathbb{R}^\mathscr{I}$,where $x \in \mathbb{R}_{>0}^\mathscr{I}$, is given by 
$(\log x)_i = \log x_i,  \text{ for all } i \in \mathscr{I}.$  The \textit{support} of $x \in \mathbb{R}^\mathscr{I}$, denoted by $\text{supp } x$, is given by
$ \text{supp } x := \{ i \in \mathscr{I} \mid x_i \neq 0 \}.$

\subsection{Fundamentals of chemical reaction networks}

We begin with the formal definition of a chemical reaction network or CRN. 

\begin{definition}
A \textbf{chemical reaction network} or CRN is a triple $\mathscr{N}:= (\mathscr{S,C,R})$ of nonempty finite sets $\mathscr{S}$, $\mathscr{C}$, and $\mathscr{R}$, of $m$ \textbf{species}, $n$ \textbf{complexes}, and $r$ \textbf{reactions}, respectively, where $\mathscr{C} \subseteq \mathbb{R}_{\geq 0}^\mathscr{S}$ and $\mathscr{R} \subset \mathscr{C} \times \mathscr{C}$ satisfying the following properties:
\begin{enumerate}[(i)]
    \item $(y,y) \notin \mathscr{R}$ for any $y \in \mathscr{C}$;
    \item for each $y \in \mathscr{C}$, there exists $y' \in \mathscr{C}$ such that $(y,y')\in \mathscr{R}$ or $(y',y)\in \mathscr{R}$.
\end{enumerate}
\end{definition}
\noindent For $y \in \mathscr{C}$, the vector $$y=\displaystyle{\sum_{A \in \mathscr{S}}} y_A A,$$ where $y_A$ is the \textbf{stoichiometric coefficient} of the species $A$. In lieu of $(y,y')\in \mathscr{R}$, we write the more suggestive notation  $y \rightarrow y'$. In this reaction, the vector $y$ is called the \textbf{reactant complex} and $y'$ is called the \textbf{product complex}. 

CRNs can be viewed as directed graphs where the complexes are vertices and the reactions are arcs. The (strongly) connected components are precisely the \textbf{(strong) linkage classes} of the CRN. A strong linkage class is a \textbf{terminal strong linkage class} if there is no reaction from a complex in the strong linkage class to a complex outside the given strong linkage class. 

\begin{definition}
A CRN with $n$ complexes, $n_r$ reactant complexes, $\ell$ linkage classes, $s\ell$ strong linkage classes, and $t$ terminal strong linkage classes is 
\begin{enumerate}[(i)]
    \item \textbf{weakly reversible} if $s\ell = \ell$;
    \item \textbf{t-minimal} if $t=\ell$;
    \item \textbf{point terminal} if $t=n-n_r$; and
    \item \textbf{cycle terminal} if $n-n_r=0$.
\end{enumerate}
\end{definition}

For every reaction, we associate a \textbf{reaction vector}, which is obtained by subtracting the reactant complex from the product complex. From a dynamic perspective, every reaction $ y \rightarrow y' \in \mathscr{R}$ leads to a change in species concentrations proportional to the  reaction vector $ \left( y' – y \right) \in \mathbb{R}^\mathscr{S}$. The overall change induced by all the reactions lies in a subspace of $\mathbb{R}^\mathscr{S}$ such that any trajectory in $\mathbb{R}^\mathscr{S}_{>0}$ lies in a coset of this subspace. 

\begin{definition}
The \textbf{stoichiometric subspace} of a network $\mathscr{N}$ is given by
$$ \mathcal{S} := \text{span } \{ y' – y \in \mathbb{R}^\mathscr{S} \mid y \rightarrow y' \in \mathscr{R} \}.$$
The \textbf{rank} of the network is defined as $s:= \dim \mathcal{S}$. For $x \in \mathbb{R}^\mathscr{S}_{>0}$, its \textbf{stoichiometric compatibility class} is defined as $(x+\mathcal{S}) \cap \mathbb{R}^\mathscr{S}_{ \geq 0}$. Two vectors $x^{*}, x^{**} \in  \mathbb{R}^\mathscr{S}$ are \textbf{stoichiometrically compatible} if $ x^{**}-x^{*} \in \mathcal{S}$.
\end{definition}

\begin{definition}
A CRN with stoichiometric subspace $S$ is said to be \textbf{conservative} if there exists a positive vector $x \in \mathbb{R}^\mathscr{S}_>$ such that $S^\perp \cap \mathbb{R}^\mathscr{S}_> \neq \emptyset$. 
\end{definition}

An important structural index of a CRN, called \textit{deficiency}, provides one way to classify networks.

\begin{definition}
The \textbf{deficiency} $\delta$ of a CRN with $n$ complexes, $\ell$ linkage classes, and rank $s$ is defined as $\delta:=n-\ell-s$.
\end{definition}

\subsection{Fundamentals of chemical kinetic systems}
It is generally assumed that the rate of a reaction $y \rightarrow y' \in \mathscr{R}$ depends on the concentrations of the species in the reaction. The exact form of the non-negative real-valued rate function $K_{ y \rightarrow y'}$ depends on the underlying \textit{kinetics}.

The following definition of kinetics is expressed in a more general context than what one typically finds in CRNT literature.
\begin{definition}
A \textbf{kinetics} for a network $\mathscr{N}=(\mathscr{S,C,R})$ is an assignment to each reaction $y \rightarrow y' \in \mathscr{R}$ a rate function $ K_{ y \rightarrow y'}: \Omega_K \rightarrow \mathbb{R}_{\geq 0}$, where $\Omega_K$ is a set such that $\mathbb{R}^\mathscr{S}_{> 0} \subseteq \Omega_K \subseteq \mathbb{R}^\mathscr{S}_{\geq 0}$, $x  \wedge  x^{*} \in \Omega_K$ whenever $x, x^{*} \in \Omega_K$, and $ K_{ y \rightarrow y'} (x) \geq 0$ for all $x \in \Omega_K$. A kinetics for a network $\mathscr{N}$ is denoted by $K:\Omega_K \rightarrow \mathbb{R}^\mathscr{R}_{\geq 0}$ (\cite{WIUF2013}). A \textbf{chemical kinetics} is a kinetics $K$ satisfying the condition that for each $y \rightarrow y' \in \mathscr{R}$, $ K_{ y \rightarrow y'} (x) >0$ if and only if $\text{supp } y \subset \text{supp } x$. The pair $(\mathscr{N},K)$ is called a \textbf{chemical kinetic system} (\cite{AJLM2017}). 
\end{definition}

The system of ordinary differential equations that govern the dynamics of a CRN is defined as follows.

\begin{definition}\label{def:ODE}
The \textbf{ordinary differential equation (ODE)} associated with a chemical kinetic system $(\mathscr{N},K)$ is defined as 
$ \dfrac{dx}{dt}=f(x)$ with \textbf{species formation rate function} 
\begin{equation}\label{eq:sfrf}
    f(x)= \sum_{ y \rightarrow y' \in \mathscr{R}} K_{ y \rightarrow y'} (x) (y'-y).
\end{equation}
A \textbf{positive equilibrium} or \textbf{steady state} $x$ is an element of $\mathbb{R}^\mathscr{S}_{>0}$ for which $f(x) = 0$.
\end{definition}

\begin{definition}\label{def:equilibria}
The \textbf{set of positive equilibria} or \textbf{steady states} of a chemical kinetic system $(\mathscr{N},K)$ is given by 
$$ E_+ (\mathscr{N},K) = \{ x \in \mathbb{R}^\mathscr{S}_{>0} \mid f(x) = 0 \}. $$
For brevity, we also denote this set by $E_+$. The chemical kinetic system is said to be \textbf{multistationary} (or has the capacity to admit \textbf{multiple steady states}) if there exist positive rate constants such that $\mid E_+ \cap \mathcal{P}\mid \geq 2$ for some positive stoichiometric compatibility class $\mathcal{P}$. On the other hand, it is \textbf{monostationary} if $\mid E_+ \cap \mathcal{P}\mid \leq 1$ for all positive stoichiometric compatibility class $\mathcal{P}$.
\end{definition}

\begin{definition}
The reaction vectors of a CRN $ (\mathscr{S,C,R})$ are \textbf{positively dependent} if for each reaction $y \rightarrow y' \in \mathscr{R}$, there exists a positive number $k_{ y \rightarrow y'}$ such that $\sum_{y \rightarrow y' \in \mathscr{R}}k_{ y \rightarrow y'} (y'-y)=0$. 
\end{definition}

\begin{remark}
 In view of Definitions \ref{def:ODE} and  \ref{def:equilibria}, a necessary condition for a chemical kinetic system to admit a positive steady state is that its reaction vectors are positively dependent.  
\end{remark}

\begin{definition}\label{def:conservative}
A CRN is said to be \textbf{conservative} if there is a vector $v \in S^\perp$ such that $v \in \mathbb{R}^\mathscr{S}_{>0}$, or equivalently, $S^\perp \cap \mathbb{R}^\mathscr{S}_{>0} \neq \emptyset$.
\end{definition}

To reformulate the species formation rate function in Eq. (\ref{eq:sfrf}), we consider the natural basis vectors $\omega_i \in \mathbb{R}^\mathscr{I}$ where $i \in \mathscr{I}=\mathscr{C}$ or $\mathscr{R}$ and define 
\begin{enumerate}
    \item[(i)] the \textbf{molecularity map} $Y: \mathbb{R}^\mathscr{C} \rightarrow \mathbb{R}^\mathscr{S}$ with $Y(\omega_y)=y$;
    \item[(ii)] the \textbf{incidence map} $I_a: \mathbb{R}^\mathscr{R} \rightarrow \mathbb{R}^\mathscr{S}$ with $I_a (\omega_{y \rightarrow y'})= \omega_{y'} - \omega_y$; and
    \item[(iii)] the \textbf{stoichiometric map} $N: \mathbb{R}^\mathscr{R} \rightarrow \mathbb{R}^\mathscr{S}$ with $N= YI_a$.
\end{enumerate}
Hence, Eq. (\ref{eq:sfrf}) can be rewritten as $f(x)=YI_aK(x)=NK(x).$ The positive steady states of a chemical kinetic system that satisfies $I_a K(x)=0$ are called \textit{complex balancing equlibria}. 
\begin{definition}
The \textbf{set of complex balanced equilibria} of a chemical kinetic system $(\mathscr{N},K)$ is the set
$$ Z_+(\mathscr{N},K) = \{ x\in \mathbb{R}^\mathscr{S}_{>0} \mid I_a K(x) =0 \} \subseteq E_+(\mathscr{N},K).$$
A chemical kinetic system is said to be \textbf{complex balanced} if it has a complex balanced equilibrium. 
\end{definition}

We define power law kinetics through the $r \times m$ \textbf{kinetic order matrix} $F=[F_{ij}]$, where $F_{ij} \in \mathbb{R}$ encodes the kinetic order the $j$th species of the reactant complex in the $i$th reaction. Further, consider the \textbf{rate vector} $k \in \mathbb{R}^\mathscr{R}_{>0}$, where $k_i \in \mathbb{R}_{>0}$ is the rate constant in the $i$th reaction. 

\begin{definition}\label{def:PLK}
A kinetics $K: \mathbb{R}^\mathscr{S}_{>0} \rightarrow \mathbb{R}^\mathscr{R}$ is a \textbf{power law kinetics} or \textbf{PLK} if
$$\displaystyle K_{i}(x)=k_i x^{F_{i,*}} \quad \text{for all } i \in \mathscr{R},$$
where $F_{i,*}$ is the row vector containing the kinetic orders of the species of the reactant complex in the $i$th reaction.
\end{definition}
 

\begin{definition} A PLK system has \textbf{reactant-determined kinetics} (or of type \textbf{PL-RDK}) if for any two \textbf{branching reactions} $i$, $j \in \mathscr{R}$ (i.e., reactions sharing a common reactant complex), the corresponding rows of kinetic orders in $F$ are identical. That is, $F_{ih}=F_{jh}$ for all $h  \in \mathscr{S}$.  
\end{definition}



\begin{definition}\label{def:Ytilde}
The $m \times n$ matrix $\widetilde{Y}$ defined by \citet{MURE2012} is the matrix  $( \widetilde{Y})_{ij} = F_{ki}$ if $j$ is a reactant complex of reaction $k$ and $( \widetilde{Y})_{ij} = 0$, otherwise. The $m \times n_r$ $\bm{T}$\textbf{-matrix} is the truncated $\widetilde{Y}$ where the non-reactant columns are deleted and $n_r$ is the number of reactant complexes. 
\end{definition}



\subsection{Independent decomposition of a CRN} \label{appendix:IndependentDecomp}
Decomposition theory was initiated by M. Feinberg in his 1987 review paper \cite{FEIN1987}. He introduced the general concept of a network decomposition of a CRN as a union of subnetworks whose reaction sets form a partition of the network’s set of reactions. He also introduced the so-called \textit{independent decomposition} of chemical reaction networks.

\begin{definition}
A decomposition of a CRN $\mathscr{N}$ into $k$ subnetworks of the form $\mathscr{N}=\mathscr{N}_1 \cup \cdots \cup \mathscr{N}_k$ is \textbf{independent} if its stoichiometric subspace is equal to the direct sum of the stoichiometric subspaces of its subnetworks, i.e., $\mathcal{S}=\mathcal{S}_1 \oplus \cdots \oplus \mathcal{S}_k$.
\end{definition}

For an independent decomposition, Feinberg concluded that any positive equilibrium of the “parent network” is also a positive equilibrium of each subnetwork.

\begin{theorem}[Rem. 5.4, \cite{FEIN1987}]  \label{feinberg theorem}
Let $(\mathscr{N},K)$ be a chemical kinetic system with partition $\{\mathscr{R}_1, \dots, \mathscr{R}_k \}$. If $\mathscr{N}=\mathscr{N}_1 \cup \cdots \cup\mathscr{N}_k$ is the network decomposition generated by the partition  and $E_+(\mathscr{N}_i,K_i)= \{ x \in \mathbb{R}^\mathscr{S}_{>0} \mid N_i K_i(x) = 0, i=1,\dots,k \}$, then 
$ \bigcap_{i=1}^kE_+ (\mathscr{N}_i, K_i)\subseteq E_+ (\mathscr{N}, K)$. If the network decomposition is independent, then equality holds.
\end{theorem}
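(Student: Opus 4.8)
The plan is to exploit the single structural fact that makes everything work: because $\{\mathscr{R}_1, \dots, \mathscr{R}_k\}$ partitions $\mathscr{R}$ and each $K_i$ is just the restriction of $K$ to $\mathscr{R}_i$, the species formation rate function is additive across the subnetworks. First I would verify directly from Equation (\ref{eq:sfrf}) that
$$
f(x) = \sum_{y \to y' \in \mathscr{R}} K_{y \to y'}(x)(y'-y) = \sum_{i=1}^k \sum_{y \to y' \in \mathscr{R}_i} K_{y \to y'}(x)(y'-y) = \sum_{i=1}^k f_i(x),
$$
where $f_i(x) = N_i K_i(x)$ is the species formation rate function of $\mathscr{N}_i$. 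Equivalently, $NK(x) = \sum_{i=1}^k N_i K_i(x)$ for all $x$.

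With additivity established, the stated inclusion is immediate and requires no hypothesis on the decomposition: if $x \in \bigcap_{i=1}^k E_+(\mathscr{N}_i, K_i)$, then $f_i(x) = 0$ for every $i$, whence $f(x) = \sum_i f_i(x) = 0$ and $x \in E_+(\mathscr{N}, K)$. So the genuine content is the reverse inclusion under independence.

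For that direction, the key observation I would record first is that each $f_i(x)$ lies in the stoichiometric subspace $\mathcal{S}_i$ of the $i$th subnetwork, since it is by construction a linear combination of reaction vectors drawn only from $\mathscr{R}_i$. Now take any $x \in E_+(\mathscr{N}, K)$, so that $0 = f(x) = f_1(x) + \cdots + f_k(x)$ with $f_i(x) \in \mathcal{S}_i$. Independence means precisely that $\mathcal{S} = \mathcal{S}_1 \oplus \cdots \oplus \mathcal{S}_k$ is a direct sum, in which the representation of every vector---in particular the zero vector---is unique. Comparing $0 = f_1(x) + \cdots + f_k(x)$ with the trivial representation $0 = 0 + \cdots + 0$ forces each summand to vanish, i.e. $f_i(x) = 0$ for all $i$, giving $x \in \bigcap_{i=1}^k E_+(\mathscr{N}_i, K_i)$.

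I do not expect a serious obstacle: once additivity is in place the remainder is linear algebra. The only step needing care is the reverse inclusion, where one must first certify the membership $f_i(x) \in \mathcal{S}_i$ and then invoke uniqueness of decomposition in a direct sum. This is exactly the point at which the independence hypothesis (rather than an arbitrary partition) becomes indispensable, so I would state that membership explicitly before applying it, to make clear why independence cannot be dropped.
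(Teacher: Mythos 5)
Your proof is correct, and it is the standard argument for this result: additivity of the species formation rate function over a partition of the reaction set gives the inclusion unconditionally, and the uniqueness of the representation of $0$ in the direct sum $\mathcal{S}_1 \oplus \cdots \oplus \mathcal{S}_k$ forces each $f_i(x)=0$ for the reverse inclusion. The paper itself offers no proof---it cites the statement as Remark 5.4 of \cite{FEIN1987}---and your argument is precisely the one found there and in the subsequent decomposition-theory literature, so there is nothing to flag; in particular you correctly isolated the two load-bearing facts (each $f_i(x) \in \mathcal{S}_i$, and directness of the sum) that make independence indispensable.
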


\subsection{Absolute concentration robustness in PLP systems}\label{appendix:PLP}

Lao et al. \cite{LLMM2022} introduced the concept of positive equilibria log-parametrized (PLP) kinetic system:

\begin{definition}
For a reaction network $\mathscr{N}$ with species $\mathscr{S}$, a \textbf{log-parametrized (LP) set} is a non-empty set of the form $$E(P,x^*)= \{ x\in \mathbb{R}^\mathscr{S}_{>0} \mid \log x -\log x^* \in P^\perp \},$$ where $P$ (called the LP set's \textbf{flux subspace}) is a subspace of $\mathbb{R}^\mathscr{S}$, $x^*$ (called the LP's \textbf{reference point}) is a given element of $\mathbb{R}^\mathscr{S}_{>0}$, and $P^\perp$ (called the LP set's \textbf{parameter subspace}) is the orthogonal complement of $P$. A chemical kinetic system $(\mathscr{N},K)$ is \textbf{positive equilibria log-parametrized (PLP) system} if its set of positive equilibria is an LP set, i.e., $E_+(\mathscr{N},K)=E(P_E,x^*)$ where $P_E$ is the flux subspace and $x^*$ is a given positive equilibrium.
\end{definition}

The \textit{Species Hyperplane Criterion} for absolute concentration robustness is recalled below.

\begin{theorem}[Theorem 3.12, \cite{LLMM2022}] 
If $(\mathscr{N},K)$ is a PLP system, then it has ACR is a species $A$ if and its parameter subspace $P_E^\perp$ is a subspace of the species hyperplane $\{x \in  \mathbb{R}^\mathscr{S} \mid x_A =0 \}$.
\end{theorem}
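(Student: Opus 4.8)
The plan is to unwind the definition of ACR through the log-parametrization of the equilibrium set, reducing the ACR condition to a containment statement about the parameter subspace $P_E^\perp$. First I would recall that, since the system is PLP, its positive equilibria form the LP set $E_+(\mathscr{N},K) = E(P_E, x^*) = \{x \in \mathbb{R}^\mathscr{S}_{>0} \mid \log x - \log x^* \in P_E^\perp\}$, and that ACR in a species $A$ means the coordinate $x_A$ assumes a single common value over all of $E_+$. Because the reference point $x^*$ is itself a positive equilibrium, that common value is forced to equal $x^*_A$; hence ACR in $A$ is equivalent to requiring $x_A = x^*_A$, i.e. $(\log x - \log x^*)_A = 0$, for every $x \in E_+$.

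The key step is to observe that the componentwise logarithm $\log : \mathbb{R}^\mathscr{S}_{>0} \to \mathbb{R}^\mathscr{S}$ is a bijection, with componentwise exponentiation as its inverse. Consequently the map $\Phi(x) := \log x - \log x^*$ is a bijection from $\mathbb{R}^\mathscr{S}_{>0}$ onto $\mathbb{R}^\mathscr{S}$, and by the defining condition of the LP set its restriction to $E_+$ is a bijection onto $P_E^\perp$. Thus, as $x$ runs through all positive equilibria, the vector $v = \Phi(x)$ runs through all of $P_E^\perp$, so the $A$-coordinate of $\Phi(x)$ realizes exactly the $A$-coordinate of an arbitrary element of $P_E^\perp$.

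Combining these, I would conclude the desired biconditional in a single stroke: ACR in $A$ holds if and only if $v_A = 0$ for every $v \in P_E^\perp$, which is precisely the assertion that $P_E^\perp \subseteq \{v \in \mathbb{R}^\mathscr{S} \mid v_A = 0\}$, the species hyperplane attached to $A$. Both implications then fall out of this one chain of equivalences with no further work.

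The step carrying the real logical weight — and the one I would be most careful to justify — is the surjectivity of $\Phi|_{E_+}$ onto the \emph{whole} of $P_E^\perp$. It is this surjectivity that upgrades ACR from a constraint on merely the equilibrium directions one happens to observe into a constraint on the entire parameter subspace; without it the ``only if'' direction would be too weak. Since surjectivity is immediate from the bijectivity of the componentwise logarithm together with the form of the LP set, I do not expect a genuine obstacle, and the proof amounts to a careful transcription of the definitions rather than any substantive new argument.
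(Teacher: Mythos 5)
Your proof is correct. Note, however, that this paper contains no proof of the statement to compare against: it is recalled verbatim (typos included --- the intended reading, which you correctly adopted, is ``has ACR \emph{in} a species $A$ \emph{if and only if} its parameter subspace\ldots'') as Theorem 3.12 of the cited work of Lao et al.\ \cite{LLMM2022}, so the result is imported, not proved, here. Your argument is the natural definitional one and is complete: since $\Phi(x)=\log x-\log x^*$ is a bijection of $\mathbb{R}^{\mathscr{S}}_{>0}$ onto $\mathbb{R}^{\mathscr{S}}$, its restriction to $E_+=\Phi^{-1}(P_E^\perp)$ is a bijection onto all of $P_E^\perp$, and you rightly flag this surjectivity as the step that makes the ``only if'' direction go through --- ACR forces every vector of $P_E^\perp$, not just those arising from observed equilibria, to have vanishing $A$-coordinate. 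The only hypothesis worth making explicit is that an LP set is by definition non-empty and its reference point $x^*$ is itself a positive equilibrium, which is what pins the common ACR value to $x^*_A$; you use this but could state it as the reason $E_+\neq\emptyset$.
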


\begin{remark}
The flux subspace of the Anderies system is its \textbf{kinetic flux subspace}, denoted by $\widetilde{S}$. This subspace is the kinetic analogue of the stoichiometric subspace. If the stoichiometric subspace is the span of the reaction vectors, the kinetic flux subspace is the span of the fluxes in terms of the kinetic vectors. In light of the discussion above, if the vector $x^*$ is any positive steady state of the system, the set of positive equilibria consists of vectors $x$ such that the vector $\log (x) - \log (x^*)$ resides in $\widetilde{S}^\perp$. Specifically, for the Anderies system, $\widetilde{S}^\perp = \text{span } \{ [-Q, 1, 1]^\top \}$ where $Q = \dfrac{q_2- q_1}{p_2-p_1}$ \cite{FORMEN2023}.
\end{remark}

\end{document}